\newcommand{\ds}{\displaystyle}
\newcommand{\N}{\mathbb{N}}
\newcommand{\C}{\mathbb{C}}
\newcommand{\R}{\mathbb{R}}
\newcommand{\D}{\mathbb{D}}
\newcommand{\re}{\textup{Re}}
\newcommand{\Hol}{\textup{Hol}}
\newcommand{\Aut}{\textup{Aut}}
\newcommand{\Int}{\textup{int}}
\newtheorem{lemma}{Lemma}[section]
\newtheorem{theorem}[lemma]{Theorem}
\newtheorem{conjecture}[lemma]{Conjecture}
\newtheorem{proposition}[lemma]{Proposition}
\newtheorem{defnition}[lemma]{Definition}
\newtheorem{remrk}[lemma]{Remark}
\newtheorem{exmple}[lemma]{Example}
\newenvironment{definition}{\begin{defnition} \em}{\end{defnition}}
\newenvironment{remark}{\begin{remrk} \em}{\end{remrk}}
\newenvironment{example}{\begin{exmple} \em}{\end{exmple}}
\title{Bounded domains on Kobayashi hyperbolic manifolds covering compact complex manifolds}
\author{Nicholas Newsome}
\date{}
\begin{document}

\maketitle

\begin{abstract}

The lack of a uniformization theorem in several complex variables leads to a desire to classify all of the simply connected domains. We use established computational methods and a localization technique to generalize a recently-published classification. In particular, we show that if a domain with $C^{1,1}$ boundary on a Kobayashi hyperbolic complex manifold contains a totally real boundary point and covers a compact manifold, then its universal cover must be the Euclidean ball. 
\end{abstract}

\section{Introduction}
\label{IntroSec}
The Riemann mapping theorem states that every proper, simply connected open subset of $\C$ is biholomorphic to the disk. However, this is not the case in higher dimensions. This means that the study of holomorphic functions of several complex variables must depend on the domains themselves. In $\C$, the uniformization theorem allows us to reduce analytic questions about planar domains to analytic questions on the disk. The absence of a higher-dimensional analogue of the uniformization theorem gives rise to a unique challenge in several complex variables: Classifying the simply connected domains. This problem is the inspiration for the current work.

The main result of this paper is the following:
\begin{theorem}
\label{dissthm}
Let $M^n$ be a taut complex manifold, and let $\Omega\subset M$ be a subdomain with nonempty boundary, and assume $\partial\Omega$ is $C^{1,1}$. Suppose there exists a totally real boundary point $p\in\partial\Omega$. Suppose further that $\Omega$ covers a compact complex manifold. Then $\Omega$ is biholomorphic to the Euclidean ball in $\C^n$.
\end{theorem}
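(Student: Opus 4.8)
The plan is to combine a Frankel-style rescaling argument, powered by the cocompactness that the covering hypothesis provides, with a localization of the entire picture to a neighborhood of $p$ in $\C^n$, where the two boundary hypotheses can be exploited by explicit computation. First I would record the structural consequences of the covering: write $X=\Omega/\Gamma$ for a discrete subgroup $\Gamma$ of $\Aut(\Omega)$ acting freely and properly discontinuously, and fix a compact $K\subset\Omega$ with $\Gamma\cdot K=\Omega$. Since $M$ is hyperbolic so is $\Omega$, and lifting analytic disks through the covering $\Omega\to X$ shows the Kobayashi infinitesimal metric of $\Omega$ is the pullback of that of $X$; as $X$ is compact this metric is complete, so $\Omega$ is a taut, complete hyperbolic manifold carrying a cocompact $\Gamma$-action. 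Completeness and cocompactness are exactly the inputs a scaling argument needs.

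Next comes the localization. Since $M$ is a complex manifold, choose a neighborhood $U\ni p$ and a biholomorphism $\Phi$ from $U$ onto a bounded domain in $\C^n$ carrying $\Omega\cap U$ to a $C^{1,1}$ domain $\widehat\Omega$ and $p$ to $0$. The boundary hypotheses now become concrete: $C^{1,1}$ gives two-sided quadratic (interior and exterior ``ball'') control of $\partial\widehat\Omega$ at $0$, while the totally real hypothesis at $p$ supplies, after an affine normalization, a local holomorphic peak function for $\widehat\Omega$ at $0$ together with a positive-definite osculating quadric, so that $\partial\widehat\Omega$ osculates at $0$ the Siegel domain $\mathcal S=\{\,\re z_1>|z'|^2\,\}$, which is biholomorphic to $\B^n$. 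This is the computational core of the localization: write $\partial\widehat\Omega$ as a graph over its real tangent hyperplane at $0$ and check that the only scaling limit of its germ at $0$ compatible with these two controls is $\mathcal S$.

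Then I would run the rescaling. Take $q_j\in\Omega$ with $q_j\to p$; since $\Gamma\cdot K=\Omega$ write $q_j=\sigma_j(k_j)$ with $\sigma_j\in\Gamma$, $k_j\in K$, and pass to a subsequence with $k_j\to k_\ast\in K$. Because holomorphic self-maps are $d_\Omega$-nonexpanding, $\sigma_j(k_\ast)\to p$ as well, and the estimates on $d_\Omega$ near $p$ coming from the totally real and $C^{1,1}$ hypotheses (the Kobayashi metric blows up uniformly near $p$) upgrade this to $\sigma_j\to p$ locally uniformly on $\Omega$. Hence the biholomorphisms $F_j:=\Phi\circ\sigma_j$ are defined on open sets $\Omega_j$ exhausting $\Omega$, with $F_j(k_\ast)\to 0\in\partial\widehat\Omega$. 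Precompose with the parabolic dilations $A_j$ of $\C^n$ adapted, via the normal form of the previous paragraph, to $\partial\widehat\Omega$ at $0$, normalized so that $A_jF_j(k_\ast)$ stays in a fixed compact subset of $\mathcal S$ and the domains $A_j(F_j(\Omega_j))$ converge in the local Hausdorff sense to $\mathcal S$. The maps $A_j\circ F_j$ then form a normal family; completeness of $d_\Omega$ rules out degeneration of the rank of the limit (Frankel's argument), and the limit is therefore a biholomorphism $\Omega\xrightarrow{\ \sim\ }\mathcal S\cong\B^n$, proving the theorem.

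The principal obstacle is marrying the rescaling to the manifold setting. One must verify that the coordinate distortion introduced by $\Phi$, and the fact that $\partial\Omega$ is merely $C^{1,1}$ (so there is no Levi form, only a.e.\ second derivatives), do not affect the scaling limit, i.e.\ that the dilations $A_j$ genuinely register only the second-order germ of $\partial\Omega$ at $p$; and that the rescaled maps converge to a nondegenerate limit rather than collapsing. Making the two-sided ``ball'' condition from $C^{1,1}$ do, uniformly along the scaling, the work that strong pseudoconvexity does in the classical Wong--Rosay--Pinchuk scheme is where most of the effort lies, and the completeness furnished by the covering hypothesis is what ultimately prevents the limit from collapsing.
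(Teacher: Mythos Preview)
Your outline has a genuine gap at the step where you assert that the rescaling limit is exactly the Siegel domain $\mathcal S$. That conclusion is what the classical Wong--Rosay--Pinchuk argument gives at a \emph{strongly pseudoconvex} boundary point, and you are implicitly smuggling strong pseudoconvexity in when you say the totally real hypothesis ``supplies \dots\ a positive-definite osculating quadric.'' It does not: totally real here means only that no positive-dimensional complex variety through $p$ lies in $\partial\Omega$, which is far weaker than having a positive-definite Levi form, and with $C^{1,1}$ regularity there is no Levi form at $p$ to speak of. The $C^{1,1}$ two-sided ball condition gives interior and exterior tangent balls of \emph{different} radii $r<1$, so under the parabolic dilations the images are only trapped between two distinct paraboloids, and the limit domain $D$ satisfies merely
\[
\mathcal P_{\alpha}\subset D\subset \mathcal P_{\beta},\qquad \alpha=\tfrac{1}{2r}>\beta=\tfrac12.
\]
There is no reason the limit should be $\mathcal S$ itself, and your ``only scaling limit compatible with these two controls is $\mathcal S$'' is simply false at this level of regularity.

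This is precisely the obstacle the paper (following Zimmer) works around. The totally real hypothesis is used, via the Lin--Wong localization lemma, only to guarantee that the orbit $\sigma_j(K)$ eventually sits inside a coordinate boundary neighborhood of $p$; it plays no role in pinning down the second-order germ. After obtaining the sandwich $\mathcal P_\alpha\subset D\subset\mathcal P_\beta$, the paper proves a separate geometric lemma to show that $\Aut(D)$ contains the one-parameter group $u_t(z)=(z_1+it,z_2,\dots,z_n)$, hence $\Aut(\Omega)$ is nondiscrete; then Frankel--Nadel forces $\Omega$ to be a bounded symmetric domain, and a final rank argument (exploiting Bergman-kernel estimates on the paraboloid sandwich) shows the rank is one, i.e.\ $\Omega\cong\B^n$. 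Your proposal collapses all of this into the scaling step, which the hypotheses are too weak to support.
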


Theorem \ref{dissthm} weakens slightly a condition of a result by Cheung et al. \cite{CFKW}, and uses a result of Zimmer \cite{Zimmer}, listed below. Theorem \ref{dissthm} also extends a result of Wong \cite{Wong77}.

\begin{proposition}[Proposition 3.1 in \cite{CFKW}]
\label{CFKWProp3.1}
Let $\Omega$ be a relatively compact subdomain of an $n$-dimensional hyperbolic complex manifold $M$. If $\Omega$ admits a compact quotient, and $\partial \Omega$ is smooth and strictly pseudoconvex near a point $p\in\partial \Omega$, then $\Omega$ is biholomorphic to the ball.
\end{proposition}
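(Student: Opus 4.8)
The plan is to place the proposition within the Wong--Rosay circle of ideas. The compact quotient forces the existence of automorphisms of $\Omega$ that drive an interior point toward the strictly pseudoconvex boundary point $p$, and a non-isotropic dilation argument (scaling) localized at $p$ then identifies $\Omega$ with an unbounded realization of the ball.

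\textbf{Step 1: escaping automorphisms.} Let $\Gamma\le\Aut(\Omega)$ be the group of deck transformations of the covering $\Omega\to X$ with $X$ compact, and fix a compact $K\subset\Omega$ surjecting onto $X$. Choose $q_j\in\Omega$ with $q_j\to p$ and $\gamma_j\in\Gamma$ with $\gamma_j(q_j)\in K$; after a subsequence $\gamma_j(q_j)\to a\in K\subset\Omega$. Put $\phi_j:=\gamma_j^{-1}\in\Aut(\Omega)$. Since $\overline\Omega$ is compact, the maps $\phi_j\colon\Omega\to\overline\Omega$ form a normal family, so along a subsequence $\phi_j\to\phi$ locally uniformly with $\phi\colon\Omega\to\overline\Omega$ holomorphic; because $\gamma_j(q_j)\to a$ and $\phi_j\to\phi$ uniformly near $a$, we get $\phi(a)=\lim_j\phi_j(\gamma_j(q_j))=\lim_j q_j=p\in\partial\Omega$. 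Smoothness and strict pseudoconvexity of $\partial\Omega$ near $p$ supply a local holomorphic peak function $h$ at $p$ (after a local change of coordinates $\partial\Omega$ is strictly convex there, and a supporting real hyperplane furnishes $h$); applying the maximum modulus principle to $h\circ\phi$, which has modulus $1$ at the interior point $a$, forces $\phi\equiv p$. Hence $\phi_j\to p$ locally uniformly on $\Omega$; in particular $p_j:=\phi_j(a)\to p$.

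\textbf{Step 2: localization and scaling.} Fix a holomorphic chart $U\ni p$ identified with an open subset of $\C^n$ with $p\mapsto 0$, so that $U\cap\Omega$ is a smoothly bounded strictly pseudoconvex piece near $0$. Since $\phi_j\to p$ locally uniformly, $\phi_j(L)\subset U\cap\Omega$ for every compact $L\subset\Omega$ and $j$ large, so $\phi_j$ restricts to a biholomorphism $\Omega_j:=\phi_j^{-1}(U\cap\Omega)\to U\cap\Omega$, where $\Omega_j$ eventually contains any prescribed compact set. Following Pinchuk, for each $j$ let $T_j\colon U\to\C^n$ be a polynomial automorphism straightening $\partial\Omega$ to second order at the point of $\partial\Omega$ nearest $p_j$, post-composed with the dilation $(z',z_n)\mapsto(\delta_j^{-1/2}z',\delta_j^{-1}z_n)$, where $\delta_j\asymp\operatorname{dist}(p_j,\partial\Omega)$. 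Strict pseudoconvexity guarantees that $T_j(U\cap\Omega)$ converges, in the local Hausdorff sense, to the Siegel domain $D_\infty=\{\re w_n+|w'|^2<0\}$, and that $T_j(p_j)\to w_0\in D_\infty$. Set $\sigma_j:=T_j\circ\phi_j\colon\Omega_j\to T_j(U\cap\Omega)$, a biholomorphism onto its image. By normal families along the exhaustion, a subsequence of $\sigma_j$ converges locally uniformly to a holomorphic $\sigma\colon\Omega\to\overline{D_\infty}$; likewise $\sigma_j^{-1}=\phi_j^{-1}\circ T_j^{-1}\to\tau\colon D_\infty\to\overline\Omega$.

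\textbf{Step 3: the limit is a biholomorphism.} Because $\phi_j$ is a Kobayashi isometry and $T_j$ a biholomorphism, $\sigma_j$ pulls back the infinitesimal Kobayashi metric of $T_j(U\cap\Omega)$ to that of $\Omega_j$; letting $j\to\infty$, using $\kappa_{\Omega_j}\to\kappa_\Omega$ along the exhaustion and the metric stability $\kappa_{T_j(U\cap\Omega)}(T_j(p_j);\cdot)\to\kappa_{D_\infty}(w_0;\cdot)$ (positive definite, since $D_\infty\cong\B^n$ is complete hyperbolic), one sees $d\sigma_j(a)$ stays bounded away from $0$ and $\infty$, so $\det d\sigma(a)\neq 0$ and $\sigma$ is nonconstant. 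The usual maximum-principle arguments (peak functions of $D_\infty$; $\Int\overline\Omega=\Omega$) then give $\sigma(\Omega)\subset D_\infty$ and $\tau(D_\infty)\subset\Omega$, and passing to the limit in $\sigma_j^{-1}\circ\sigma_j=\Id$ and $\sigma_j\circ\sigma_j^{-1}=\Id$ (legitimate because $\sigma(L)$, resp. $\tau(L')$, lies in a compact subset of the open target over which the convergence is uniform) yields $\tau\circ\sigma=\Id_\Omega$ and $\sigma\circ\tau=\Id_{D_\infty}$. Thus $\Omega$ is biholomorphic to $D_\infty$, hence to $\B^n$ via the Cayley transform. The main obstacle is the scaling analysis of Steps 2--3: verifying the Hausdorff convergence $T_j(U\cap\Omega)\to D_\infty$, controlling the $0\cdot\infty$ competition in $d\sigma_j(a)=dT_j(p_j)\,d\phi_j(a)$, and ruling out boundary degeneracy of $\sigma$ and $\tau$ — exactly the points where $C^2$ regularity, strict pseudoconvexity of $\partial\Omega$ near $p$, and the hyperbolicity of $M$ are indispensable.
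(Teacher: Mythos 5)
This proposition is quoted from \cite{CFKW} and the paper contains no proof of its own, only the introductory sketch (automorphism orbits accumulating at a strongly pseudoconvex boundary point, interior geometry close to that of the ball, a limiting argument); your proposal is precisely that standard Wong--Rosay/Pinchuk scaling route, and its outline is correct. The steps you defer --- local Hausdorff convergence of the scaled domains, stability of the Kobayashi metric under the scaling, and nondegeneracy of the limit map --- are the standard technical core of such arguments and are available in the literature; alternatively, after your Step 1 one can simply invoke the localization of Lin--Wong together with the (localized) Wong--Rosay theorem, which is essentially how \cite{CFKW} conclude.
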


\begin{theorem}[Theorem 1.1 in \cite{Zimmer}]
\label{ZimThm1.1}
Suppose $\Omega\subset\C^n$ is a bounded domain which covers a compact manifold. If $\partial\Omega$ is $C^{1,1}$, then $\Omega$ is biholomorphic to the unit ball in $\C^n$.
\end{theorem}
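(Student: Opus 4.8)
The plan is to use the cocompact group action to promote the hypotheses on $\Omega$ to strong biholomorphic invariants, and then to recognize the ball by a Pinchuk--Frankel rescaling at a generic boundary point. Write $X=\Omega/\Gamma$ for a discrete group $\Gamma\leq\Aut(\Omega)$ acting freely, properly discontinuously, and cocompactly (as usual we take the covering regular, so that the deck group sits inside $\Aut(\Omega)$). Three facts follow. (i) Every holomorphic map $\C\to X$ lifts to the bounded domain $\Omega$ and is therefore constant, so the compact manifold $X$ is Brody hyperbolic, hence Kobayashi hyperbolic (Brody's theorem) with complete distance; pulling back along the covering, $k_\Omega$ is a complete distance. (ii) The squeezing function $s_\Omega$ is $\Aut(\Omega)$-invariant and continuous, so it descends to a positive continuous function on the compact $X$; hence $c:=\inf_\Omega s_\Omega>0$, i.e.\ $\Omega$ has the uniform squeezing property. (iii) For a fixed $w_0\in\Omega$ the orbit $\Gamma w_0$ accumulates at \emph{every} point of $\partial\Omega$: given $z_j\to q\in\partial\Omega$, write $z_j=\gamma_j(w_j)$ with $w_j$ in a fixed compact fundamental set and pass to $w_j\to w_\infty\in\Omega$; since $k_\Omega(\gamma_jw_0,z_j)=k_\Omega(w_0,w_j)$ stays bounded while completeness forces $k_\Omega(w_0,z_j)\to\infty$, the points $\gamma_jw_0$ leave every compact subset of $\Omega$, and for a bounded $C^{1,1}$ domain two sequences converging to $\partial\Omega$ at bounded Kobayashi distance must converge to the same boundary point, so $\gamma_jw_0\to q$.

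\textbf{Rescaling at an Alexandrov point.} As $\partial\Omega$ is $C^{1,1}$, its local graphing functions have Lipschitz gradient and hence, by Rademacher's theorem, admit a genuine second-order Taylor expansion at almost every boundary point; fix such a point $q$. After an affine change of coordinates (placing $q$ at the origin with complex tangent space $\{z_n=0\}$) and a polynomial automorphism of $\C^n$ absorbing the holomorphic quadratic part of the expansion, $\Omega$ near $q$ reads $\{\operatorname{Im}z_n>H_q(z',\bar z')+o(|z|^2)\}$, with $H_q$ the Levi form at $q$. Apply the anisotropic dilations $\Lambda_t(z',z_n)=(t^{-1}z',t^{-2}z_n)$ to $\Omega$ along an inward-normal sequence $\nu_t\to q$: relying only on the pointwise expansion at $q$, one checks that $\Lambda_t(\Omega-q)$ converges in the local Hausdorff sense to the model $\widehat\Omega:=\{\operatorname{Im}z_n>H_q(z',\bar z')\}$, with each $\Lambda_t(\Omega-q)$ biholomorphic to $\Omega$.

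\textbf{Identifying the model.} The crucial point is that $\widehat\Omega$ is Kobayashi hyperbolic. This should come from the uniform squeezing of $\Omega$ passing to the rescaling limit: $s_{\Lambda_t(\Omega-q)}\geq c$ by invariance, and the stability of the squeezing function under this limit gives $s_{\widehat\Omega}\geq c>0$ at the base point, so $\widehat\Omega$ embeds in a ball and is hyperbolic. Granting this, $\widehat\Omega$ contains no affine complex line, which forces $H_q$ to be positive definite --- otherwise, letting a non-positive eigen-coordinate run over $\C$ (taking $\operatorname{Im}z_n$ large when the eigenvalue is negative) produces such a line in $\widehat\Omega$. Hence $\widehat\Omega$ is the Siegel domain $\{\operatorname{Im}z_n>\sum_j\lambda_j|z_j|^2\}$ with all $\lambda_j>0$, which is biholomorphic to $\B^n$. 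So $q$ is a strongly pseudoconvex boundary point accumulated by an automorphism orbit, the model is now taut, and the Pinchuk--Frankel normal-families argument (using completeness of $k_\Omega$) produces a biholomorphism $\Omega\cong\widehat\Omega\cong\B^n$, which is Theorem~\ref{ZimThm1.1}. One may instead finish through the squeezing function: $s_{\widehat\Omega}\equiv 1$, so by finer stability there are $\zeta_j\to q$ with $s_\Omega(\zeta_j)\to 1$; pulling the $\zeta_j$ back into the compact fundamental set by automorphisms and using invariance and continuity of $s_\Omega$ yields an interior point at which $s_\Omega=1$, and a point where the squeezing function equals $1$ forces the domain to be biholomorphic to $\B^n$.

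\textbf{Main obstacle.} The difficulty is exactly the ``crucial point'' above carried out with only $C^{1,1}$ regularity: one has a pointwise Alexandrov jet at $q$, not a uniform $C^2$ neighborhood, so both the convergence $\Lambda_t(\Omega-q)\to\widehat\Omega$ and --- more seriously --- the stability of the squeezing function (equivalently, of the Kobayashi metric) under this rescaling, which is what excludes the weakly pseudoconvex case and then drives the endgame, demand genuine work; in effect one needs a localized Wong--Rosay theorem valid at a single boundary point of minimal regularity (for $C^\infty$ boundary near a strongly pseudoconvex $q$ the final step is Proposition~\ref{CFKWProp3.1}). Granting that analytic input, the implication ``$\Omega$ covers a compact manifold'' $\Rightarrow$ ``$\Omega\cong\B^n$'' becomes formal, the only essential geometric fact being that a bounded domain --- and anything biholomorphic to one --- contains no affine complex line.
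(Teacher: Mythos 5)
Your own ``Main obstacle'' paragraph is not a deferred technicality --- it is the entire content of the theorem at $C^{1,1}$ regularity, and it is missing. Your plan localizes at an Alexandrov point $q$ and needs (a) local Hausdorff convergence of the anisotropic rescalings $\Lambda_t(\Omega-q)$ to the Siegel-type model and, far more seriously, (b) stability of the squeezing function (equivalently of the Kobayashi metric) under that limit, i.e.\ a localized Wong--Rosay theorem at a single point of mere second-order differentiability. No such stability result is available to you: convergence theorems of the type you would invoke (e.g.\ Theorem \ref{ZimThm4.2}) require convexity or comparable uniform control of the approximating domains, and your rescaled domains are not convex and carry no uniform geometry beyond the pointwise jet at $q$. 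The paper states explicitly that at $C^{1,1}$ ``localizing around a strongly pseudoconvex point is no longer possible,'' and Zimmer's proof --- the one this paper cites and adapts --- is built to avoid exactly your step: it rescales along the automorphism orbit using the uniform interior/exterior ball condition of Lemma \ref{ZimLem4.5}, so every rescaled domain is pinched between affine images of balls whose limits are the convex model domains of Example \ref{Zex4.4}, yielding $\mathcal{P}_{\alpha}\subset D\subset\mathcal{P}_{\beta}$; it then shows $\Aut(D)$ contains the one-parameter group $u_t$, applies Frankel--Nadel to conclude $\Omega$ is a bounded symmetric domain (Proposition \ref{ZimProp6.1}), and eliminates rank $\geq 2$ by Bergman kernel asymptotics (Propositions \ref{ZimProp2.3}, \ref{ZimObs2.5}, \ref{MRProof4}). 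No boundary point is ever shown to be strongly pseudoconvex, which is precisely why the method survives the loss of $C^{2}$ regularity. As written, your argument therefore has a central gap, not a routine verification.

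There is also a secondary unjustified step: in (iii) you assert that for a bounded $C^{1,1}$ domain two sequences tending to $\partial\Omega$ at bounded Kobayashi distance must converge to the same boundary point. That is a visibility property which does not follow from $C^{1,1}$ regularity, completeness, or uniform squeezing: completeness only gives pseudoconvexity, and a pseudoconvex $C^{1,1}$ (even smooth) boundary may contain analytic discs (Levi-flat pieces), near which bounded-distance sequences converge to distinct boundary points, exactly as happens on product-like domains. Without it you cannot force the orbit to accumulate at your chosen Alexandrov point; at best you can rescale at accumulation points of $\Gamma\cdot K$ for a compact fundamental set $K$ (as the paper does via Lemma \ref{LWLem2.1}), but then you have no guarantee that such a point admits a second-order expansion, which undercuts the localization on which the rest of your sketch depends.
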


\begin{theorem}[Wong \cite{Wong77}]
\label{WongC2}
Suppose $\Omega\subset\C^n$ is a bounded domain which covers a compact manifold. If $\partial\Omega$ is $C^2$, then $\Omega$ is biholomorphic to the unit ball.
\end{theorem}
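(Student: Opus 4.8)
The plan is to reduce the theorem to the classical rigidity statement that a bounded domain whose automorphism group admits an orbit converging to a strictly pseudoconvex $C^{2}$ boundary point is biholomorphic to the ball, and then to indicate the scaling argument that proves that statement (this is the content of Wong's paper, completed in this generality by Rosay). For the reduction: since $\Omega$ is bounded, $\Aut(\Omega)$ is a Lie group acting properly on $\Omega$, and near any strictly pseudoconvex $C^{2}$ boundary point $p$ the Kobayashi distance $d_\Omega(\cdot,w)$ blows up. The hypothesis that $\Omega$ covers a compact complex manifold provides a group $\Gamma\le\Aut(\Omega)$ of deck transformations acting freely, properly discontinuously and cocompactly, with compact quotient $\Omega/\Gamma$; as $\Omega$ is noncompact, $\Gamma$ is infinite. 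I would fix an interior point $a\in\Omega$ and take $p\in\partial\Omega$ realizing $\max_{\overline\Omega}|z-a|$: then $\Omega\subset B\big(a,|p-a|\big)$, internally tangent to its boundary sphere at $p$, so, since $\partial\Omega\in C^{2}$, the Levi form of $\partial\Omega$ at $p$ dominates that of the sphere and hence is positive definite, i.e.\ $p$ is strictly pseudoconvex. Finally, picking a compact $K\subset\Omega$ with $\Gamma\cdot K=\Omega$ and a sequence $w_j\to p$ in $\Omega$, writing $w_j=\gamma_j(k_j)$ with $k_j\in K$, and passing to a subsequence with $k_j\to q_0\in K$, the fact that each $\gamma_j$ is a Kobayashi isometry together with the blow-up of $d_\Omega$ near $p$ forces $\varphi_j:=\gamma_j$ to satisfy $\varphi_j(q_0)\to p$; this is the required automorphism orbit accumulating at a strictly pseudoconvex boundary point.

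\textbf{Scaling at $p$ and the limiting map.} Next I would choose holomorphic coordinates centered at $p=0$ with $\Omega=\{\rho<0\}$ near $0$ and $\rho(z)=2\re z_n+|z|^{2}+o(|z|^{2})$, which is possible because $\partial\Omega$ is $C^{2}$ and strictly pseudoconvex at $p$ (complete the square in the second-order Taylor polynomial of a defining function and absorb the Levi form). With $q_j:=\varphi_j(q_0)\to0$, $q_j^{\ast}\in\partial\Omega$ the nearest boundary point, and $\varepsilon_j:=|q_j-q_j^{\ast}|\to0$, I would form the affine maps $\Lambda_j$ of $\C^{n}$ that translate $q_j^{\ast}$ to $0$, rotate the inner real normal at $q_j^{\ast}$ onto the $\re z_n$-axis, and then apply the non-isotropic dilation $(z',z_n)\mapsto(\varepsilon_j^{-1/2}z',\varepsilon_j^{-1}z_n)$; a direct computation gives $\Lambda_j(q_j)\to(0',-1)$ and that $\Omega_j:=\Lambda_j(\Omega)$ converges, in the local Hausdorff sense, to the fixed Siegel domain $\widehat\Omega:=\{\,2\re z_n+|z'|^{2}<0\,\}$, which is biholomorphic to $\B^{n}$ by a Cayley transform. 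Setting $\psi_j:=\Lambda_j\circ\varphi_j\colon\Omega\to\Omega_j$ (so $\psi_j(q_0)\to(0',-1)$) and considering the inverse branches $\psi_j^{-1}$, I would invoke the sharp localization and boundary asymptotics of the Kobayashi (and Carath\'eodory) metrics at a strictly pseudoconvex $C^{2}$ point to get, uniformly in $j$, two-sided bounds on $\det d\psi_j$ near $q_0$; a normal-families argument --- using the decrease of Kobayashi distance to keep $\psi_j$ uniformly bounded on compacta, and similarly for $\psi_j^{-1}$ into the fixed $\widehat\Omega$ --- then extracts a subsequence with $\psi_j\to\psi\colon\Omega\to\overline{\widehat\Omega}$ and $\psi_j^{-1}\to\chi\colon\widehat\Omega\to\overline\Omega$ locally uniformly. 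The Jacobian bound together with Hurwitz's theorem keep $\psi(\Omega)\subset\widehat\Omega$ and $\chi(\widehat\Omega)\subset\Omega$, and passing to the limit in $\psi_j^{-1}\circ\psi_j=\Id_\Omega$ and $\psi_j\circ\psi_j^{-1}=\Id_{\Omega_j}$ yields $\chi\circ\psi=\Id_\Omega$ and $\psi\circ\chi=\Id_{\widehat\Omega}$, so $\psi\colon\Omega\to\widehat\Omega\cong\B^{n}$ is a biholomorphism.

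\textbf{The main obstacle.} The hard part will be the non-degeneracy in the last step: ensuring that $|\det d\psi_j|$ stays bounded away from $0$ and $\infty$ near $q_0$ and that interior points of the $\Omega_j$ remain interior in the limit, so that $\psi$ is a genuine biholomorphism and not a constant or a map into $\partial\widehat\Omega$. This is exactly where strict pseudoconvexity of $p$ --- not merely pseudoconvexity --- is used, through the ball-like boundary behavior of the invariant metrics, and where the $C^{2}$ (rather than $C^{\infty}$) hypothesis forces one to use the low-regularity versions of those estimates and to control the $o(|z|^{2})$ error in $\rho$ uniformly under the anisotropic dilations. An alternative for this step is Wong's original argument via the Bergman kernel --- combining its ball-like asymptotics at a strictly pseudoconvex point with Lu Qi-Keng's rigidity theorem --- but in either route the non-degeneracy of the limiting object is the crux.
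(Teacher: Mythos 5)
Your proposal is correct in outline and follows essentially the same route the paper attributes to Wong and Rosay (the paper itself only cites the result and sketches the idea): locate a strictly pseudoconvex boundary point of the $C^2$-bounded domain via the farthest-point/internally tangent sphere trick, use cocompactness of the deck group to produce an automorphism orbit accumulating there, and conclude by the scaling (or Bergman-kernel) rigidity argument that the domain is the ball. The one step you gloss --- that $\varphi_j(q_0)\to p$ follows from bounded Kobayashi distance to $w_j\to p$ --- is exactly the standard attraction estimate at strictly pseudoconvex points, so no genuine gap.
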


Theorem \ref{WongC2} was proved by Wong for strongly pseudoconvex domains. Rosay \cite{Rosay} later extended the result to any bounded domain with $C^2$ boundary.

The proof of Proposition \ref{CFKWProp3.1} -- as well as other rigidity results similar to it and Theorem \ref{WongC2} -- relies on the idea that the interior complex geometry of the domain near a strongly pseudoconvex boundary point is close to that of the ball. Moreover, every bounded domain in $\C^n$ with $C^2$ boundary has at least one strongly pseudoconvex point on the boundary. Then, since the domain covers a compact complex manifold, the interior geometry must be \textit{everywhere} close to that of the ball; the theorem follows through use of a limiting argument.

Zimmer's contribution for the $C^{1,1}$ case is to notice that localizing around a strongly pseudoconvex point is no longer possible. Zimmer's proof of Theorem \ref{ZimThm1.1} uses a rescaling technique of Frankel \cite{Frankel89} to show that the domain $\Omega\subset\C^n$ is biholomorphic to a domain $D\subset\C^n$ containing a one-parameter subgroup. It follows by a theorem of Frankel and Nadel \cite{Frankel}, \cite{Nadel} that $\Omega$ is a bounded symmetric domain. Zimmer then uses the geometry of the rescaled domain to show that $\Omega$ is the unit ball. 

Our proof follows much of the same logic as Zimmer's. The main difference in the arguments is in the construction of the rescaling maps. For this construction, we use a localization technique of Lin and Wong \cite{LW} (see Section \ref{LWSec}) to force the domain $\Omega\subset M$ to be considered as a domain in $\C^n$. In their paper, Lin and Wong accomplish this by choosing holomorphic coordinates near a boundary point so that the local geometry is Euclidean. The boundary then assumes a normalized form. After an additional scaling, the localized domain converges to a domain in $\C^n$. Normal family arguments and other classical tools from several complex variables are now applicable to the original manifold. This construction is crucial in our proof of Theorem \ref{dissthm}, particularly in the proof of Proposition \ref{MRProof1}. In this result, we introduce new sequences of automorphisms in order to use the localization technique, and so our argument requires careful verification that Zimmer's computations remain valid. 

Since we consider domains of a general complex manifold as opposed to domains in $\C^n$, it is natural to expect some differences between the hypotheses of our main result and Zimmer's (Theorem \ref{ZimThm1.1}). In particular, the tautness assumption is necessary in order to use the localization technique. Some of these differences are slight, and so we note that our argument is, in some places, identical to Zimmer's, which we indicate in the presentation of our proof.

The remainder of this paper is laid out as follows. In Section \ref{GeoSec}, we discuss relevant geometric properties of domains in $\C^n$. In Section \ref{LWSec}, we describe the aforementioned localization technique \cite{LW} that will be instrumental in the construction of our rescaling maps. The proof of Theorem \ref{dissthm} is given in Section \ref{ProofSec}. Finally, in Section \ref{ConclusionSec} we present a potential consequence of our main result.

%%%%%%%%%%%%%%%%%%%%%%%%%%%%%%%%%%%%%%%%%%%%%%%%%%%%%%%%%%%%%%%%%%%%%%%%%%%%
\section{Geometry of Domains}
\label{GeoSec}

\subsection{Hyperbolic Manifolds}
Here we introduce some terminology associated with hyperbolic manifolds as it pertains to our main result. A thorough discussion on hyperbolic manifolds can be found in \cite{KobayashiHyper}.

\begin{definition}
\label{kobayashimetric}
Let $M$ be a complex manifold. For $x\in M$ and $v\in T^{(1,0)}M$, the \textit{Kobayashi-Royden pseudometric} is defined by
$$k_{M}(x,v)=\inf\left\{\frac{1}{\lambda}\mid f\in\Hol(\mathbb{D},M),f(0)=x, f'(0)=\lambda v,\lambda>0\right\}.$$
\end{definition}

We let $d_{M}^K(x,y)$ denote the induced Kobayashi pseudodistance of $M$. It has the explicit form
$$d_{M}^K(z,w)=\inf_{\gamma}\int_0^1k_{M}(\gamma(t),\gamma'(t))dt.$$
Here, $z,w\in M$ and $\gamma:[0,1]\to M$ is a piecewise $C^1$ curve starting at $z$ and ending at $w$.

\begin{definition}
\label{hyperbolicmanifolddef}
A complex manifold $M$ is called \textit{(Kobayashi) hyperbolic} if $d_M^K$ is a distance. $M$ is called \textit{completely hyperbolic} if $d_M^K$ is a complete distance.
\end{definition}

As it is pertinent to our main result, we mention a result concerning the Kobayashi distance on bounded domains. Recall that a \textit{proper metric space} is one in which bounded sets are relatively compact.

\begin{proposition}
\label{ZimProp3.1}
Let $\Omega\subset\C^n$ be a bounded domain such that $\Aut(\Omega)$ acts cocompactly on $\Omega$. Then $(\Omega,d^K_{\Omega})$ is a proper metric space.
\end{proposition}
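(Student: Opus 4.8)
The plan is to show that $(\Omega, d_\Omega^K)$ is complete as a metric space and that bounded sets are totally bounded, or equivalently to argue directly that closed bounded balls are compact. First I would recall two standard facts about the Kobayashi distance: it is distance-decreasing under holomorphic maps, and for a bounded domain $\Omega \subset \C^n$ the identity inclusion into a large Euclidean ball $B$ containing $\Omega$ shows that $d_\Omega^K$ dominates (a constant times) the Euclidean distance locally, while the inclusion of a small Euclidean ball around any point $x_0 \in \Omega$ into $\Omega$ shows $d_\Omega^K$ is locally bounded above by the Euclidean distance near $x_0$. Consequently $d_\Omega^K$ induces the standard topology on $\Omega$, and in particular $(\Omega, d_\Omega^K)$ is locally compact. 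The subtlety, and the reason the cocompactness hypothesis is needed, is that this does not by itself give properness: balls could fail to be relatively compact because sequences escape to $\partial\Omega$.

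The key step is to use the cocompact action of $\Aut(\Omega)$ to upgrade local control to global control. Let $K \subset \Omega$ be a compact set with $\Aut(\Omega) \cdot K = \Omega$. Since $d_\Omega^K$ induces the Euclidean topology and $K$ is compact, there is a radius $r_0 > 0$ such that the closed Kobayashi ball $\overline{B}_{d_\Omega^K}(x, r_0)$ is contained in a fixed compact subset of $\Omega$, uniformly for $x \in K$; indeed one can take $K' \supset K$ compact with $K'$ a neighborhood of $K$, and then on the compact set $K$ the function $x \mapsto d_\Omega^K(x, \partial K')$ has a positive minimum $r_0$ (here I am using that $d_\Omega^K$ is a genuine distance on the bounded domain $\Omega$, which is standard, together with continuity). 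Because every automorphism is an isometry of $d_\Omega^K$, translating by elements of $\Aut(\Omega)$ shows that for \emph{every} $x \in \Omega$, the ball $\overline{B}_{d_\Omega^K}(x, r_0)$ is contained in $g(K')$ for an appropriate $g \in \Aut(\Omega)$, hence is compact (being a closed subset of the compact set $g(K')$, using that $d_\Omega^K$-closed sets are Euclidean-closed).

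From the uniform radius $r_0$ I would conclude properness by a standard chaining argument. Given any $x \in \Omega$ and any $R > 0$, write $R \le N r_0$ for some integer $N$; then $\overline{B}_{d_\Omega^K}(x, R)$ is covered by finitely many iterated $r_0$-balls. Concretely, $\overline{B}_{d_\Omega^K}(x, r_0)$ is compact, so it is covered by finitely many $r_0$-balls centered at points $x_1, \dots, x_m$; each $\overline{B}_{d_\Omega^K}(x_i, r_0)$ is compact; the union $\bigcup_i \overline{B}_{d_\Omega^K}(x_i, r_0) \supseteq \overline{B}_{d_\Omega^K}(x, 2r_0)$ is compact (finite union of compact sets); iterating $N$ times shows $\overline{B}_{d_\Omega^K}(x, R)$ is contained in a compact set, and being $d_\Omega^K$-closed it is itself compact. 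Hence every closed bounded set in $(\Omega, d_\Omega^K)$ is compact, which is precisely the assertion that $(\Omega, d_\Omega^K)$ is a proper metric space.

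I expect the main obstacle to be the uniform lower bound: establishing that there is a single $r_0 > 0$ such that $r_0$-balls around points of the compact fundamental domain $K$ stay inside a fixed compact subset of $\Omega$. This requires knowing that on a bounded domain $d_\Omega^K$ is actually a distance (so that distances to the boundary are positive) and that it is continuous with respect to the Euclidean topology; both are classical but should be cited carefully (e.g.\ to \cite{KobayashiHyper}). Once that uniform $r_0$ is in hand, the transfer via isometries and the chaining argument are routine. One should also double-check the harmless point that $\Aut(\Omega)$ indeed acts by $d_\Omega^K$-isometries, which is immediate from the distance-decreasing property applied to an automorphism and its inverse.
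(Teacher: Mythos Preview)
The paper does not prove this proposition; it is quoted (as Proposition~3.1 of Zimmer \cite{Zimmer}) and stated without argument, so there is nothing in the paper to compare your proof against.

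Your overall strategy is the standard one and is sound: use cocompactness to obtain a uniform radius $r_0>0$ for which every closed Kobayashi $r_0$-ball is compact, then upgrade to arbitrary radii. Two points need tightening. First, in the uniform-radius step you want $r_0=d_\Omega^K\!\big(K,\Omega\setminus\Int(K')\big)$ rather than the distance to $\partial K'$; what you actually need is that $d_\Omega^K(x,y)<r_0$ with $x\in K$ forces $y\in K'$, and that follows because the distance between the compact $K$ and the disjoint closed set $\Omega\setminus\Int(K')$ is positive. Second, and more substantively, the chaining step as written fails: covering $\overline{B}(x,r_0)$ by finitely many $r_0$-balls centered at $x_1,\dots,x_m$ does \emph{not} yield $\bigcup_i \overline{B}(x_i,r_0)\supseteq \overline{B}(x,2r_0)$. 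A point $y$ with $d_\Omega^K(x,y)\le 2r_0$ admits (via a near-midpoint $z$) some $z\in\overline{B}(x,r_0)$ with $d_\Omega^K(z,y)\le r_0$, but there is no reason $z$ is one of the finitely many $x_i$, and $d_\Omega^K(x_i,y)$ can exceed $r_0$. The clean repair is to note that uniform compactness of $r_0$-balls already gives completeness (any Cauchy sequence is eventually trapped in a single compact $r_0$-ball) and local compactness; since $d_\Omega^K$ is, by the definition used in the paper, the length metric associated to $k_\Omega$, the Hopf--Rinow theorem for length spaces then yields that all closed bounded sets are compact. With that substitution your argument goes through.
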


\begin{definition}
\label{TautDef} 
    A complex manifold $M$ is said to be \textit{taut} if for every complex manifold $N$, the set of all holomorphic functions from $N$ to $M$ is a normal family.
\end{definition}

\begin{remark}
\label{TautisHyperbolicRmk}
    It follows from the definitions that taut manifolds are always hyperbolic.
\end{remark}

\subsection{Rescaling}

To prove Theorem \ref{dissthm}, we will need to rescale a domain and look at its limit with respect to the local Hausdorff topology. Zimmer \cite{Zimmer} uses this method of rescaling for the proof of Theorem \ref{ZimThm1.1}, and we use it in much the same way. We refer the reader to \cite{Frankel89} for a detailed treatment of this technique.

\begin{definition}
\label{localhausdorffdef}
Here we define the \textit{local Hausdorff topology} on the set of all convex domains in $\C^n$. First, define the \textit{Hausdorff distance} between two compact sets $X,Y\subset\C^n$ by 
$$d_H(X,Y)=\max\left\{\max_{x\in X}\min_{y\in Y}\|x-y\|,\max_{y\in Y}\min_{x\in X}\|y-x\|\right\}.$$
To obtain a topology on the set of all convex domains in $\C^n$, we consider the \textit{local Hausdorff pseudodistances} defined by 
$$d_H^{(R)}(X,Y)=d_H\left(X\cap\overline{B_R(0)},Y\cap\overline{B_R(0)}\right), \ R>0.$$
Then a sequence of convex domains $\Omega_j$ converges to a convex domain $\Omega$ if there exists some $R_0\ge0$ such that 
$$\lim_{j\to\infty}d_H^{(R)}\left(\overline{\Omega_j},\overline{\Omega}\right)=0$$
for all $R\ge R_0$.
\end{definition}

\begin{remark}
The Kobayashi distance is continuous with respect to the local Hausdorff topology. 
\end{remark}

\begin{theorem}
\label{ZimThm4.2}
Suppose $\Omega_j\subset\C^n$ is a sequence of convex domains and $\ds\Omega=\lim_{j\to\infty}\Omega_j$ in the local Hausdorff topology. Assume the Kobayashi distance is nondegenerate on $\Omega$ and each $\Omega_j$. Then
$$d_{\Omega}^K(p,q)=\lim_{j\to\infty}d_{\Omega_j}^K(p,q)$$
for all $p,q\in\Omega$. Moreover, the convergence is uniform on compact subsets of $\Omega\times\Omega$.
\end{theorem}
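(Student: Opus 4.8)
The plan is to prove the two one–sided estimates
\[
\limsup_{j\to\infty}d^K_{\Omega_j}(p,q)\le d^K_\Omega(p,q)\qquad\text{and}\qquad\liminf_{j\to\infty}d^K_{\Omega_j}(p,q)\ge d^K_\Omega(p,q),
\]
and then to upgrade the resulting pointwise convergence to local uniform convergence by equicontinuity. Two facts are used throughout. First, the stated continuity of the Kobayashi--Royden metric in the local Hausdorff topology is used in the form: if $K\subset\Omega$ is compact and $K\subset\Omega_j$ for all large $j$, then $k_{\Omega_j}\to k_\Omega$ uniformly on $K\times S^{2n-1}$, hence, by homogeneity $k_D(x,\lambda v)=\lambda k_D(x,v)$, uniformly over all $(x,v)$ with $x\in K$. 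Second, \emph{every compact $K\subset\Omega$ lies in $\Omega_j$ for all sufficiently large $j$}: pick $\delta>0$ with $K+\overline{B_\delta(0)}\subset\Omega$ and $R$ with $K+\overline{B_\delta(0)}\subset B_{R-1}(0)$; once $d_H^{(R)}(\overline{\Omega_j},\overline\Omega)<\delta/\sqrt{2n}$, each $x\in K$ is the center of the cross--polytope with the $2n$ vertices $x\pm\delta e_k$ (with $e_1,\dots,e_{2n}$ the real basis of $\C^n$), each such vertex lies in $\overline\Omega\cap\overline{B_R(0)}$ hence within $\delta/\sqrt{2n}$ of $\overline{\Omega_j}$, and since the center of that cross--polytope has distance $\delta/\sqrt{2n}$ to each of its facets, $x$ stays in the interior of the convex hull of the perturbed vertices, so $x\in\Omega_j$ by convexity of $\Omega_j$; the threshold on $j$ can be taken uniform over $x\in K$.

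\emph{Upper estimate.} Fix $\varepsilon>0$ and a curve $\gamma\colon[0,1]\to\Omega$ from $p$ to $q$ with $\int_0^1 k_\Omega(\gamma,\gamma')\,dt\le d^K_\Omega(p,q)+\varepsilon$. Its image is compact in $\Omega$, hence contained in $\Omega_j$ for $j$ large, so $\gamma$ is a curve in $\Omega_j$ from $p$ to $q$ and $d^K_{\Omega_j}(p,q)\le\int_0^1 k_{\Omega_j}(\gamma,\gamma')\,dt$. By uniform convergence of $k_{\Omega_j}\to k_\Omega$ along the compact curve, the right side tends to $\int_0^1 k_\Omega(\gamma,\gamma')\,dt\le d^K_\Omega(p,q)+\varepsilon$; letting $j\to\infty$ and then $\varepsilon\to0$ gives the estimate.

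\emph{Lower estimate.} Pass to a subsequence (not relabeled) along which $d^K_{\Omega_j}(p,q)\to L:=\liminf_j d^K_{\Omega_j}(p,q)$, which is finite by the upper estimate, and choose curves $\gamma_j\colon[0,1]\to\Omega_j$ from $p$ to $q$ with $\int_0^1 k_{\Omega_j}(\gamma_j,\gamma_j')\,dt\le d^K_{\Omega_j}(p,q)+1/j$. The crucial point — and the main obstacle — is that \emph{the images $\gamma_j([0,1])$ all lie in one compact subset $K^\ast\subset\Omega$ for $j$ large}. Granting this, fix $\varepsilon>0$; for $j$ large, uniform convergence on $K^\ast\times S^{2n-1}$ with homogeneity gives $k_{\Omega_j}(\gamma_j,\gamma_j')\ge k_\Omega(\gamma_j,\gamma_j')-\varepsilon\|\gamma_j'\|$, while nondegeneracy of $k_\Omega$ together with its lower semicontinuity forces $k_\Omega\ge c\|\cdot\|$ on the compact set $K^\ast\times S^{2n-1}$ for some $c>0$. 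For $\varepsilon<c/2$ these two facts bound the Euclidean lengths $\int_0^1\|\gamma_j'\|\,dt$ by a constant $\ell^\ast$ independent of $j$, and then
\[
d^K_{\Omega_j}(p,q)+\tfrac1j\ \ge\ \int_0^1 k_\Omega(\gamma_j,\gamma_j')\,dt-\varepsilon\ell^\ast\ \ge\ d^K_\Omega(p,q)-\varepsilon\ell^\ast.
\]
Letting $j\to\infty$ and then $\varepsilon\to0$ gives $L\ge d^K_\Omega(p,q)$; as every subsequential limit equals $d^K_\Omega(p,q)$, the full sequence converges, and the limit is approached uniformly on any compact $C\subset\Omega\times\Omega$ because the $d^K_{\Omega_j}$ are equicontinuous there — one has $|d^K_{\Omega_j}(p,q)-d^K_{\Omega_j}(p',q')|\le d^K_{\Omega_j}(p,p')+d^K_{\Omega_j}(q,q')$ and, on a compact neighborhood of $C$, $d^K_{\Omega_j}(p,p')\le M\|p-p'\|$ with $M$ independent of $j$, since $k_{\Omega_j}$ is bounded there uniformly in $j$ — so pointwise convergence gives uniform convergence.

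It remains to confine the $\gamma_j$. The upper estimate bounds the Kobayashi length of $\gamma_j$ in $\Omega_j$ by $L+1$. For a convex domain $D$, comparison with the supporting complex half--space at the boundary point nearest to $x$ yields $k_D(x,v)\ge\frac{1}{2\,\mathrm{dist}(x,\partial D)}\,|\langle v,\mathbf n\rangle|$, where $\mathbf n$ is the associated complex direction, so that $\bigl|\tfrac{d}{dt}\log\mathrm{dist}(\gamma_j(t),\partial\Omega_j)\bigr|\le 2\,k_{\Omega_j}(\gamma_j(t),\gamma_j'(t))$ almost everywhere; integrating, $\mathrm{dist}(\gamma_j(t),\partial\Omega_j)\ge e^{-2(L+1)}\mathrm{dist}(p,\partial\Omega_j)$, and the right side is bounded below uniformly in $j$ because $\mathrm{dist}(p,\partial\Omega_j)\to\mathrm{dist}(p,\partial\Omega)>0$ by local Hausdorff convergence. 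Combined with Euclidean boundedness of the $\gamma_j$ — immediate when $\Omega$ is bounded, since then the $\Omega_j$ are eventually contained in a common ball — this confines $\gamma_j$ to a fixed compact subset of $\Omega$. I expect this confinement to be the one genuinely delicate step: a near–minimizing curve in $\Omega_j$ must be prevented both from drifting toward the part of $\partial\Omega_j$ far from $\partial\Omega$ and, when $\Omega$ is unbounded, from escaping along a thin tentacle, and it is precisely the convexity of all the domains — via the boundary–distance estimates above and, in the unbounded case, the complete hyperbolicity of a convex domain with nondegenerate Kobayashi metric — that rules this out.
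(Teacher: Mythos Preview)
The paper does not supply its own proof of this theorem: it is stated in Section~\ref{GeoSec} as a background result (attributed to \cite{Zimmer}) and the text moves immediately to Example~\ref{Zex4.4}. So there is nothing in the paper to compare your argument against; you have written a proof where the paper simply cites one.

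As for your argument itself, the overall two-sided strategy is the standard one and the upper estimate is clean. The lower estimate hinges, as you correctly flag, on confining the near-minimizers $\gamma_j$ to a fixed compact subset of $\Omega$. Your boundary-distance inequality $\bigl|\tfrac{d}{dt}\log\delta_{\Omega_j}(\gamma_j)\bigr|\le 2k_{\Omega_j}(\gamma_j,\gamma_j')$ is the right tool and does keep $\gamma_j$ uniformly away from $\partial\Omega_j$; combined with local Hausdorff convergence this keeps $\gamma_j$ uniformly away from $\partial\Omega$ as well, once you know the curves sit inside some fixed $B_R(0)$. That Euclidean boundedness is where your write-up becomes a sketch: in the unbounded case you invoke ``complete hyperbolicity of a convex domain with nondegenerate Kobayashi metric,'' but completeness of each $(\Omega_j,d^K_{\Omega_j})$ only tells you $\gamma_j$ stays in a Kobayashi ball of $\Omega_j$, and you still need a \emph{uniform-in-$j$} comparison between Kobayashi balls and Euclidean balls to conclude. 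This can be done (e.g.\ via Barth-type linear-slice estimates for convex domains, which are stable under local Hausdorff convergence), but it is not automatic from what you have written. Your cross-polytope argument for ``compact $K\subset\Omega$ eventually lies in $\Omega_j$'' is also a bit loose---perturbing the $2n$ vertices by $\delta/\sqrt{2n}$ does not obviously keep the center strictly inside the perturbed hull---though the claim itself is true and follows more directly from a supporting-hyperplane/compactness argument.
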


We conclude this section with an example (see \cite{Zimmer}) that will appear in the proof of Theorem \ref{dissthm}.

\begin{example}(Example 4.4 in \cite{Zimmer})
\label{Zex4.4}
For $\alpha>0$, define
$$\mathcal{P}_{\alpha}=\left\{(z_1,\dots,z_n)\in\C^n\mid\re(z_1)>\alpha\sum_{j=2}^{n}|z_j|^2\right\}$$

Note that $\mathcal{P}_{\alpha}$ is biholomorphic to a ball.

Fix $r>0$, a sequence $r_j>0$ converging to $0$, and the sequence of linear maps
$$\Lambda_j(z_1,\dots,z_n)=\left(\frac{1}{r_j}z_1,\frac{1}{\sqrt{r_j}}z_2,\dots,\frac{1}{\sqrt{r_j}}z_n\right).$$
Then $$\mathcal{P}_{1/(2r)}=\lim_{j\to\infty}\Lambda_j(B_r(re_1))$$ in the local Hausdorff topology.
\end{example}

\subsection{Bergman Kernel}

The computations necessary for the proof of Theorem \ref{dissthm} require the following well-known description of the Bergman kernel. See Section 1.4 in \cite{Krantz} for more details and examples. 

\begin{definition}
\label{bergmankerneldef}
Let $\mu$ denote the standard Lebesgue measure on $\C^n$. For a domain $\Omega\subset\C^n$, let 
$$H^2(\Omega)=\left\{f\in\Hol(\Omega,\C) \ \bigg| \ \int_{\Omega}|f|^2d\mu<\infty\right\}.$$
Then $H^2(\Omega)$ is a Hilbert space. If $\{\phi_j\}$ is an orthonormal basis of $H^2(\Omega)$, then the function 
\begin{align*}
    &\kappa_{\Omega}:\Omega\times\Omega\to\C\\
    &\kappa_{\Omega}(z,w)=\sum_j\phi_j(z)\overline{\phi_j(w)}
\end{align*}
is called the \textit{Bergman kernel} of $\Omega$.
\end{definition}

The convergence of the series is absolute and uniform on compact subsets of $\Omega\times\Omega$, and for any $z\in\Omega$, the diagonal $\kappa_{\Omega}(z,z)$ is strictly positive. Additionally, $\kappa_{\Omega}$ does not depend on the choice of orthonormal basis.

The next result - Proposition \ref{ZimProp2.3} - is typically referred to as the monotonicity property. 

\begin{proposition}
\label{ZimProp2.3}
If $\Omega_1\subset\Omega_2\subset\C^n$ are domains, then 
$$\kappa_{\Omega_2}(z,z)\le\kappa_{\Omega_1}(z,z)$$
for all $z\in\Omega_1$.
\end{proposition}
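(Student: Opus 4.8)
The plan is to reduce the inequality to the classical extremal (Rayleigh--quotient) description of the Bergman kernel on the diagonal. Concretely, I would first establish that for every domain $\Omega\subset\C^n$ and every $z\in\Omega$,
$$\kappa_{\Omega}(z,z)=\sup\Bigl\{\,|f(z)|^2 \ \Big| \ f\in H^2(\Omega),\ \|f\|_{L^2(\Omega)}\le 1\,\Bigr\}.$$
Granting this, the monotonicity is immediate: if $\Omega_1\subset\Omega_2$ and $f\in H^2(\Omega_2)$ with $\|f\|_{L^2(\Omega_2)}\le 1$, then, since $\Omega_1\subset\Omega_2$, we have $\int_{\Omega_1}|f|^2\,d\mu\le\int_{\Omega_2}|f|^2\,d\mu\le 1$, so the restriction $f|_{\Omega_1}$ lies in $H^2(\Omega_1)$ and is admissible in the supremum defining $\kappa_{\Omega_1}(z,z)$; hence $|f(z)|^2\le\kappa_{\Omega_1}(z,z)$ for every such $f$, and taking the supremum over $f$ yields $\kappa_{\Omega_2}(z,z)\le\kappa_{\Omega_1}(z,z)$ for all $z\in\Omega_1$.

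The substance of the argument is therefore the extremal formula. To prove it, I would use that point evaluation $f\mapsto f(z)$ is a bounded linear functional on $H^2(\Omega)$: choosing $r>0$ with $\overline{B_r(z)}\subset\Omega$, the sub-mean-value property of the subharmonic function $|f|^2$ gives $|f(z)|^2\le\mu(B_r(z))^{-1}\,\|f\|_{L^2(\Omega)}^2$. By the Riesz representation theorem there is a unique $K_z\in H^2(\Omega)$ with $f(z)=\langle f,K_z\rangle$ for all $f\in H^2(\Omega)$; expanding $K_z$ in the orthonormal basis $\{\phi_j\}$ and noting $\langle K_z,\phi_j\rangle=\overline{\phi_j(z)}$ identifies $K_z$ with the kernel $\kappa_{\Omega}(\cdot,z)=\sum_j\phi_j(\cdot)\overline{\phi_j(z)}$ of Definition \ref{bergmankerneldef}, and in particular $\|\kappa_{\Omega}(\cdot,z)\|_{L^2(\Omega)}^2=\kappa_{\Omega}(z,z)$. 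Then Cauchy--Schwarz gives $|f(z)|^2=|\langle f,\kappa_{\Omega}(\cdot,z)\rangle|^2\le\|f\|_{L^2(\Omega)}^2\,\kappa_{\Omega}(z,z)$, with equality when $f$ is a scalar multiple of $\kappa_{\Omega}(\cdot,z)$ — a legitimate competitor since $\kappa_{\Omega}(z,z)>0$, as recorded after Definition \ref{bergmankerneldef}. This proves the extremal formula.

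The only non-formal point — and hence the main obstacle — is the verification of the extremal formula, specifically the boundedness of point evaluation (needing a ball around $z$ inside $\Omega$ together with the sub-mean-value inequality) and the identification of the Riesz representative with the series $\sum_j\phi_j(\cdot)\overline{\phi_j(z)}$; the remainder is just the one-line monotonicity of the $L^2$-norm under restriction. In the write-up I would either record the extremal formula as a short lemma or simply cite \cite{Krantz}, as the excerpt already does, and then give the two-line deduction above.
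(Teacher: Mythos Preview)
Your argument is correct and is precisely the standard proof: the paper does not supply its own argument for Proposition~\ref{ZimProp2.3} but simply refers to a textbook such as \cite{Krantz}, and the extremal characterization plus restriction-of-$L^2$-norm argument you outline is exactly what one finds there. There is nothing to add or correct.
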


Finally, we recall a property of the Bergman kernel \cite{Zimmer} that we will use later in the proof of Theorem \ref{dissthm}.

\begin{proposition}[Observation 2.5 in \cite{Zimmer}]
\label{ZimObs2.5}
Recall that for $\alpha>0$, we define 
$$\mathcal{P}_{\alpha}=\left\{(z_1,\dots,z_n)\in\C^n\mid\re(z_1)>\alpha\sum_{j=2}^{n}|z_j|^2\right\}.$$
There is a constant $C_{\alpha}>0$ such that
$$\kappa_{\mathcal{P}_{\alpha}}\left((\lambda,0,\dots,0),(\lambda,0,\dots,0)\right)=C_{\alpha}\re(\lambda)^{-(n+1)}$$
for all $(\lambda,0,\dots,0)\in\mathcal{P}_{\alpha}$.
\end{proposition}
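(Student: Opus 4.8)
The plan is to exploit the fact that $\mathcal{P}_\alpha$ carries two one-parameter groups of biholomorphic automorphisms and to combine this with the transformation law for the Bergman kernel. First I would record that, since $\mathcal{P}_\alpha$ is biholomorphic to the ball, $H^2(\mathcal{P}_\alpha)$ is nonzero (in fact infinite-dimensional), so $\kappa_{\mathcal{P}_\alpha}$ is a well-defined function and, as noted after Definition \ref{bergmankerneldef}, is strictly positive on the diagonal.

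Next, observe that for every $b\in\R$ the imaginary translation $T_b(z_1,z_2,\dots,z_n)=(z_1+ib,z_2,\dots,z_n)$, and for every $t>0$ the anisotropic dilation $\delta_t(z_1,z_2,\dots,z_n)=(tz_1,\sqrt{t}\,z_2,\dots,\sqrt{t}\,z_n)$, both map $\mathcal{P}_\alpha$ biholomorphically onto itself: the defining inequality $\re(z_1)>\alpha\sum_{j=2}^n|z_j|^2$ is manifestly preserved by $T_b$, and under $\delta_t$ both sides of the inequality are multiplied by $t$, so it is preserved as well. Their complex Jacobian determinants are $\det T_b'\equiv 1$ and $\det\delta_t'\equiv t^{(n+1)/2}$ (a diagonal matrix with one entry $t$ and $n-1$ entries $\sqrt{t}$).

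Then I would apply the standard transformation rule $\kappa_\Omega(\Phi(z),\Phi(z))\,|\det\Phi'(z)|^2=\kappa_\Omega(z,z)$, valid for any biholomorphism $\Phi$ of a domain $\Omega$ onto itself, which is immediate from the orthonormal-basis definition of $\kappa_\Omega$ together with the holomorphic change-of-variables formula. Writing $g(x):=\kappa_{\mathcal{P}_\alpha}\big((x,0,\dots,0),(x,0,\dots,0)\big)$ for $x>0$, invariance under $T_b$ (with $b=-\im(\lambda)$) gives $\kappa_{\mathcal{P}_\alpha}\big((\lambda,0,\dots,0),(\lambda,0,\dots,0)\big)=g(\re(\lambda))$ for every admissible $\lambda$, while invariance under $\delta_t$ yields $g(x)=t^{\,n+1}\,g(tx)$ for all $t,x>0$. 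Taking $t=1/x$ gives $g(x)=C_\alpha\,x^{-(n+1)}$ with $C_\alpha:=g(1)$, which is strictly positive by the positivity of the diagonal kernel; this is exactly the asserted formula.

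I do not expect a serious obstacle: the only points needing a little care are verifying that the dilations $\delta_t$ preserve $\mathcal{P}_\alpha$ itself (rather than merely some other paraboloid) and that $H^2(\mathcal{P}_\alpha)\neq 0$ so that $\kappa_{\mathcal{P}_\alpha}$ makes sense. One could alternatively derive the same formula, together with an explicit value of $C_\alpha$, by pulling back the known Bergman kernel of the unit ball through an explicit Cayley-type biholomorphism $\mathbb{B}^n\to\mathcal{P}_\alpha$, but the homogeneity argument above is shorter and gives everything the statement requires.
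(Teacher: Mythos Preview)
Your argument is correct: the imaginary translations $T_b$ and anisotropic dilations $\delta_t$ are indeed biholomorphic automorphisms of $\mathcal{P}_\alpha$, their Jacobian determinants are as you computed, and the transformation law for the diagonal Bergman kernel then forces $g(x)=g(1)\,x^{-(n+1)}$ exactly as you wrote. The only minor caveat is that the remark following Definition~\ref{bergmankerneldef} asserts strict positivity of the diagonal kernel for domains (implicitly bounded ones); since $\mathcal{P}_\alpha$ is unbounded you are right to justify $\kappa_{\mathcal{P}_\alpha}(z,z)>0$ separately via the biholomorphism with the ball, and you do so.

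There is nothing to compare against in the paper itself: Proposition~\ref{ZimObs2.5} is stated without proof here and simply attributed to \cite{Zimmer}. Your homogeneity argument is the standard one and is almost certainly what lies behind Zimmer's observation as well; the alternative you mention---pulling back the explicit ball kernel through a Cayley transform---would also work and would additionally pin down the constant $C_\alpha$, but is unnecessary for the statement as given.
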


\subsection{Bounded Symmetric Domains}

The proof of Theorem \ref{dissthm} - in particular, Proposition \ref{MRProof4} - requires results on the theory of bounded symmetric domains. We recall the necessary definitions and statements.

\begin{definition}
\label{symmetricdomaindef}
A bounded domain $\Omega\subset\C^n$ is called \textit{symmetric} if $\Aut(\Omega)$ is a semisimple Lie group which acts transitively on $\Omega$.
\end{definition}

\begin{definition}
Let $\Omega$ be a bounded symmetric domain. The \textit{real rank of $\Omega$} is the largest integer $r$ with the property that there exists a holomorphic isometric embedding $f:(\D^r,d^K_{\D^r})\to(\Omega,d^K_{\Omega})$.
\end{definition}

E. Cartan \cite{Cartan} characterized the bounded symmetric domains, showing that each one has real rank at least $1$. Moreover, the only bounded symmetric domain of rank $r=1$ is the ball.

Let $\Omega\subset\C^n$ be a bounded symmetric domain. Harish-Chandra constructed an embedding $F:\Omega\hookrightarrow\C^n$ whose image is convex and bounded (see \cite{Satake}, Chapter II, Section 4). Moreover, there exists a norm $\|\cdot\|_{HC}$ on $\C^n$ such that
\[
F(\Omega)=\left\{z\in\C^n \ \mid \ \|z\|_{HC}<1\right\}.
\]
We denote the image of this embedding by $F(\Omega)=\Omega_{HC}$. We say that $\Omega$ is in \textit{standard form} if it coincides with the image of its Harish-Chandra embedding, i.e., $\Omega=\Omega_{HC}$.

\begin{theorem}[Frankel, Nadel, \cite{Frankel}, \cite{Nadel}]
\label{FNSymmetric}
Let $X$ be a compact complex manifold with $c_1(X)<0$, and let $\widetilde{X}$ be the universal cover. If $\Aut\left(\widetilde{X}\right)$ is nondiscrete, then $\widetilde{X}$ is biholomorphic to either
\begin{enumerate}
    \item[(a)] a bounded symmetric domain, or
    \item[(b)] a nontrivial product $D_1\times D_2$, where $D_1$ is a bounded symmetric domain and $\Aut(D_2)$ is discrete.
\end{enumerate}
\end{theorem}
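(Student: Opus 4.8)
The plan is to equip $\widetilde X$ with an $\Aut(\widetilde X)$-invariant complete K\"ahler metric, deduce that $G:=\Aut(\widetilde X)$ is a Lie group acting properly on $\widetilde X$, and then determine the structure of $G$ from the hypothesis $c_1(X)<0$. First I would observe that $c_1(X)<0$ makes $K_X$ ample, so by the theorem of Aubin and Yau $X$ carries a K\"ahler--Einstein metric $\omega$ with $\operatorname{Ric}(\omega)=-\omega$; its pullback $\widetilde\omega$ is a complete K\"ahler metric on $\widetilde X$ invariant under the deck group $\Gamma=\pi_1(X)$, and $\Gamma$ acts on $(\widetilde X,\widetilde\omega)$ freely, properly discontinuously, cocompactly, and isometrically. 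Ampleness of $K_X$ also provides a positively curved Hermitian metric on $K_{\widetilde X}$, so H\"ormander-type $L^2$ estimates for $\bar\partial$ on the complete manifold $(\widetilde X,\widetilde\omega)$ yield $L^2$ holomorphic $n$-forms on $\widetilde X$ that separate points and $1$-jets. Consequently the Bergman metric $\beta$ built from these forms is nondegenerate; being biholomorphically invariant it is preserved by all of $G$, and being $\Gamma$-invariant it descends to the compact manifold $X$ and is therefore complete on $\widetilde X$. By the theorems of Myers--Steenrod and Bochner--Montgomery the isometry group of $(\widetilde X,\beta)$ is a Lie group acting properly on $\widetilde X$; since $G$ is a closed subgroup of it, $G$ is a Lie group acting properly and holomorphically on $\widetilde X$, with nontrivial identity component $G^{\circ}$ by hypothesis and with $\Gamma\subset G$ discrete and cocompact.

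The hard part -- this is Nadel's theorem -- is to show that $G$ is semisimple with no compact factors. Suppose the solvable radical $R\trianglelefteq G$ were nontrivial. The elements of $\operatorname{Lie}(R)$ are complete holomorphic vector fields on $\widetilde X$, and none of them (being nonzero) can be $\Gamma$-invariant, because a $\Gamma$-invariant holomorphic vector field would descend to $X$, impossible since $\Aut(X)$ is finite when $K_X$ is ample (so $H^{0}(X,T_X)=0$). The idea is to use that $R$ is \emph{normal}, so $\Gamma$ acts on $R$ by conjugation, and then to combine cocompactness of $\Gamma$ with a Borel-density-type averaging argument to manufacture out of $R$ a nonzero holomorphic tensor on $X$ that obstructs ampleness of $K_X$ -- a contradiction. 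This forces $G$ to be semisimple; a further argument, again exploiting $c_1(X)<0$, rules out compact simple factors, so $G$ is semisimple without compact factors, hence noncompact since $G^{\circ}\neq\{e\}$.

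Finally I would feed the semisimple action into the structure theory of K\"ahler manifolds with large symmetry: a semisimple Lie group without compact factors acting properly, effectively, holomorphically, and isometrically on the complete K\"ahler manifold $\widetilde X$ has a closed orbit $D_1\subset\widetilde X$ which is a simply connected homogeneous bounded domain, and the K\"ahler structure forces a holomorphic splitting $\widetilde X\cong D_1\times D_2$ on which $G$ acts through the $D_1$ factor. By Borel and Koszul a simply connected homogeneous bounded domain with semisimple automorphism group is a bounded symmetric domain, so $D_1$ is symmetric; and $\Aut(D_2)$ must be discrete, since a nontrivial one-parameter subgroup of $\Aut(D_2)$ would enlarge $G^{\circ}$ beyond what acts on $D_1$. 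If $D_2$ reduces to a point we are in case (a), and otherwise in case (b). I expect the genuinely delicate points to be the radical-killing step in the middle paragraph and the passage from a proper semisimple holomorphic action to the clean product decomposition with symmetric $D_1$.
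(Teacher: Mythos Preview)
The paper does not prove this theorem. It is quoted as a known result, with attribution to Frankel \cite{Frankel} and Nadel \cite{Nadel}, and is invoked only as a black box (via Zimmer's Proposition \ref{ZimProp6.1}) in the course of proving Theorem \ref{dissthm}. There is therefore no ``paper's own proof'' against which to compare your attempt.

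That said, your outline does follow the broad architecture of the Frankel--Nadel argument: an invariant complete K\"ahler metric on $\widetilde X$ (obtained from Aubin--Yau or the Bergman construction) to make $G=\Aut(\widetilde X)$ a Lie group acting properly; Nadel's step killing the solvable radical so that $G$ is semisimple without compact factors; and Frankel's step producing the holomorphic splitting $\widetilde X\cong D_1\times D_2$ with $D_1$ symmetric. You correctly flag the two hard places. A couple of points where your sketch is looser than the actual proofs: the radical step is not really a ``Borel-density averaging'' argument in the usual sense---Nadel's proof works by analyzing the adjoint action of $\Gamma$ on $\mathrm{Lie}(G)$ and exploiting that holomorphic vector fields and their symmetric tensor powers on $X$ vanish when $K_X$ is ample, which needs some care; and in the final step the orbit $D_1$ lives inside $\widetilde X$, not a priori in $\C^n$, so calling it a ``homogeneous bounded domain'' and invoking Borel--Koszul directly skips a step---Frankel's route to the product decomposition goes through harmonic map rigidity rather than simply identifying an orbit as a bounded domain. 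These are exactly the ``genuinely delicate points'' you already singled out, so your self-assessment is accurate.
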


In the statement of Theorem \ref{FNSymmetric}, $c_1(X)$ refers to the first Chern class of $X$. See \cite{Chern}, \cite{Milnor}, and \cite{Zheng} for a full definition.

%%%%%%%%%%%%%%%%%%%%%%%%%%%%%%%%%%%%%%%%%%%%%%%%%%%%%%%%%%%%%%%%%%%%%%%%%%%%
\section{A Localization Technique}
\label{LWSec}

In this section, we describe a localization technique of Lin and Wong \cite{LW} which we will use in the first part of the proof of Theorem \ref{dissthm}. In their paper, Lin and Wong use the technique to obtain a partial answer to a main problem in K\"ahler geometry: Does the universal cover of a compact K\"ahler manifold with negative sectional curvature admit a nontrivial, bounded holomorphic function? We refer the reader to \cite{LW} for the details of the following results, as well as other applications of the technique. 

We begin with some vocabulary that appears throughout the proof of Theorem \ref{dissthm}.

\begin{definition}
\label{boundaryneighborhood}
Let $\Omega$ be a domain on a complex manifold $M$, and let $p\in\partial\Omega$ be a fixed boundary point. A \textit{boundary neighborhood of $p$} is an open set $\widehat{\Omega}=\Omega\cap U$, where $U$ is a relatively compact coordinate chart centered at $p$.
\end{definition}

\begin{remark}
\label{BndryDomainisDomain}
Since $U$ is a relatively compact coordinate chart, its image under the chart map is a bounded open subset of $\C^n$. Therefore, a boundary neighborhood is biholomorphic to a bounded domain in $\C^n$.
\end{remark}

\begin{definition}
We say a point $p\in\partial\Omega$ is \textit{totally real} if there exists no complex analytic variety containing $p$ of positive dimension lying on $\partial\Omega$.
\end{definition}

\begin{definition}
\label{covercompactmanifolddef}
Recall that $\Aut(\Omega)$ is the group of biholomorphisms of $\Omega$. A domain $\Omega$ on a complex manifold $M$ is said to \textit{cover a compact manifold} if there exists a discrete group $\Gamma\le\Aut(\Omega)$ such that $\Gamma$ acts freely, properly discontinuously, and cocompactly on $\Omega$. 
\end{definition}

\begin{definition}
\label{compactquotientdef}
We say a domain $\Omega$ on a complex manifold $M$ \textit{admits a compact quotient} if $\Omega/\Aut(\Omega)$ is compact.
\end{definition}

\begin{remark}
A domain $\Omega$ admits a compact quotient if $\Omega$ covers a compact manifold.
\end{remark}

The following results describe the localization technique. The proofs (see \cite{LW}) require classical results of Montel and Cartan, as well as standard normal family arguments.

\begin{lemma}[Lemma 2.1 in \cite{LW}]
\label{LWLem2.1}
Let $\Omega$ be a domain on a taut manifold admitting a compact quotient. Then there exists a compact set $K\subset\Omega$ such that for every $y\in\Omega$ there is $t\in K$ and $g\in\Aut(\Omega)$ such that $g(t)=y$ (i.e., $\Aut(\Omega)\cdot K=\Omega$).
\end{lemma}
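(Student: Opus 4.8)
The plan is to read the statement as a soft consequence of the compactness of the quotient together with the local compactness of $\Omega$, the only genuine input being that the orbit projection is an open map. Write $Q=\Omega/\Aut(\Omega)$ and let $\pi\colon\Omega\to Q$ be the canonical projection; by hypothesis (and Definition \ref{compactquotientdef}) $Q$ is compact, and $\pi$ is continuous for the quotient topology. First I would record that $\pi$ is \emph{open}: for any open $U\subseteq\Omega$ one has $\pi^{-1}(\pi(U))=\bigcup_{g\in\Aut(\Omega)}g(U)$, which is open since each $g\in\Aut(\Omega)$ is in particular a homeomorphism of $\Omega$; hence $\pi(U)$ is open in $Q$.

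Next I would manufacture $K$ by a covering argument. Since $\Omega$ is a complex manifold it is locally compact, so every point admits an open neighborhood whose closure in $\Omega$ is compact. For each $x\in Q$ choose a preimage $t_x\in\pi^{-1}(x)$ and a relatively compact open neighborhood $V_x$ of $t_x$ in $\Omega$. By the previous paragraph each $\pi(V_x)$ is open in $Q$, and $x\in\pi(V_x)$, so $\{\pi(V_x)\}_{x\in Q}$ is an open cover of the compact space $Q$; extract a finite subcover $\pi(V_{x_1}),\dots,\pi(V_{x_m})$. Put
$$K=\overline{V_{x_1}}\cup\cdots\cup\overline{V_{x_m}}\subset\Omega.$$
Then $K$ is compact, being a finite union of compact sets, and $\pi(K)\supseteq\pi(V_{x_1})\cup\cdots\cup\pi(V_{x_m})=Q$, so $\pi(K)=Q$.

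It then remains to translate $\pi(K)=Q$ into the orbit statement. Let $y\in\Omega$. Since $\pi(y)\in Q=\pi(K)$, there is $t\in K$ with $\pi(t)=\pi(y)$, i.e.\ $t$ and $y$ lie in the same $\Aut(\Omega)$-orbit; hence there is $g\in\Aut(\Omega)$ with $g(t)=y$. This is precisely the assertion $\Aut(\Omega)\cdot K=\Omega$.

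I do not anticipate a real obstacle: the whole argument is point-set topology, and it uses neither the hyperbolicity of the ambient manifold nor the finer structure of the action (e.g.\ proper discontinuity) — only that $\Omega$ is locally compact and the quotient is compact. The sole points deserving a line of justification are the openness of $\pi$ and the availability of the relatively compact neighborhoods $V_x$, both entirely standard; this is consistent with the companion localization statements, where the substance lies in the normal-family and Montel/Cartan arguments rather than here.
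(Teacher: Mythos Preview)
Your argument is correct and is the standard point-set proof that a cocompact action admits a compact fundamental set; the paper does not supply its own proof of this lemma but defers to \cite{LW}, remarking only that the proofs of Lemmas~\ref{LWLem2.1} and~\ref{LWLem2.5} together ``require classical results of Montel and Cartan, as well as standard normal family arguments.'' As you correctly anticipate in your closing paragraph, those analytic tools are needed for the companion Lemma~\ref{LWLem2.5} rather than here, and the present statement is indeed the elementary consequence of local compactness plus openness of the orbit projection that you have written out.
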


\begin{definition}
\label{fundamentalsetdef}
The compact set $K$ in Lemma \ref{LWLem2.1} is called the \textit{fundamental set} for $\Aut(\Omega)$.
\end{definition}

\begin{lemma}[Lemma 2.4 in \cite{LW}]
\label{LWLem2.4}
    Let $\Omega$ be a domain on a taut manifold $M$ admitting a compact quotient. Let $\{x_j\}\subset\Omega$ be a sequence converging to a boundary point $p\in\partial\Omega$. Then there exists a sequence $\{m_j\}\subset\Aut(\Omega)$ such that $m_j^{-1}(x_j)$, through a subsequence if necessary, converges to a point $x\in\Omega$. Moreover, $m_j(x)\to p$.
\end{lemma}

\begin{lemma}[Lemma 2.5 in \cite{LW}]
\label{LWLem2.5}
Let $\Omega$ be a domain on a taut manifold $M$, and suppose there is a totally real boundary point $p\in\partial\Omega$. Let $\{m_j\}\subset\Aut(\Omega)$ be a sequence such that $m_j(x)\to p$ for some $x\in\Omega$. Then for any compact set $K\subset\Omega$ and any boundary neighborhood $\widehat{\Omega}$ of $p$, $m_j(K)\subset\widehat{\Omega}$ for sufficiently large $j$. In particular, $m_j(y)\to p$ for any $y\in\Omega$.
\end{lemma}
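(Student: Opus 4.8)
The plan is to extract a subsequential limit of $\{m_j\}$ by a normal-families argument, use an H.\ Cartan--type dichotomy to see that this limit maps $\Omega$ into $\partial\Omega$, and then invoke the total reality of $p$ to force the limit to be the constant map $p$; the stated assertion then follows by a routine subsequence argument. To produce the limit: since $M$ is hyperbolic, the Kobayashi distance $d^K_M$ is a genuine distance inducing the manifold topology, and the inclusion $\Omega\hookrightarrow M$ is distance decreasing, so $d^K_M(m_j(z),m_j(w))\le d^K_\Omega(z,w)$ for all $z,w\in\Omega$; thus each $m_j$ is $1$-Lipschitz as a map $(\Omega,d^K_\Omega)\to(M,d^K_M)$. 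Combined with the hypothesis $m_j(x)\to p$, this localizes the maps near $p$ and, via Montel's theorem, yields a subsequence (not relabeled) converging locally uniformly on $\Omega$ to a holomorphic map $F\colon\Omega\to\overline\Omega$ with $F(x)=p$. (This is the ``standard normal family'' step; the detailed bookkeeping on a general hyperbolic manifold is carried out in \cite{LW}.)

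Next I would apply the Cartan dichotomy. Because $F(x)=p\in\partial\Omega$, the limit $F$ is not an element of $\Aut(\Omega)$; by H.\ Cartan's theorem on limits of automorphisms -- read in a boundary neighborhood of $p$, which is a bounded domain in $\C^n$, so that the classical statement applies -- this forces $F(\Omega)\subset\partial\Omega$. Now suppose, for contradiction, that $F$ is non-constant. Since $\Omega$ is connected, the germ of $F$ at $x$ is non-constant, so there is a holomorphic disc $\sigma\colon\D\to\Omega$ with $\sigma(0)=x$ along which $F\circ\sigma$ is non-constant. Then $g:=F\circ\sigma\colon\D\to\partial\Omega$ is a non-constant holomorphic disc with $g(0)=p$, and the image germ of a non-constant holomorphic disc is a $1$-dimensional complex analytic germ; hence $\partial\Omega$ contains a positive-dimensional complex analytic variety through $p$, contradicting the total reality of $p$. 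Therefore $F\equiv p$.

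For the conclusion: the argument above shows that every subsequence of $\{m_j\}$ admits a further subsequence converging locally uniformly to the constant map $p$, hence $m_j\to p$ locally uniformly on $\Omega$. In particular, for a compact set $K\subset\Omega$ and a boundary neighborhood $\widehat\Omega=\Omega\cap U$ of $p$, uniform convergence on $K$ gives $m_j(K)\subset U$ for all large $j$, and since $m_j(K)\subset\Omega$ always, $m_j(K)\subset\Omega\cap U=\widehat\Omega$. Taking $K=\{y\}$ and letting $U$ run through a neighborhood basis of $p$ gives $m_j(y)\to p$ for every $y\in\Omega$.

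I expect the main obstacle to be the first two steps in the manifold setting: guaranteeing that $\{m_j\}$ is a normal family whose limit is valued in $\overline\Omega$ rather than escaping in $M$, and extracting $F(\Omega)\subset\partial\Omega$ from a Cartan-type argument when $\Omega$ is not assumed relatively compact in $M$. These are the technical heart of the localization technique (and are handled in \cite{LW}); by contrast, once $F$ is in hand the total-reality step is short, and the passage to the stated conclusion is purely formal.
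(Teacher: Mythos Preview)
Your approach is essentially the one the paper indicates: the paper does not prove this lemma itself but defers to \cite{LW}, noting only that the argument ``require[s] classical results of Montel and Cartan, as well as standard normal family arguments.'' Your outline uses precisely these ingredients --- a Montel-type normal-families extraction, the Cartan dichotomy on limits of automorphisms, and the total-reality hypothesis to kill the non-constant case --- and you correctly flag that the delicate points (normality and the Cartan step when $\Omega$ is not a bounded domain in $\C^n$) are exactly what \cite{LW} supplies.
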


The importance of the localization technique is in the power of Lemma \ref{LWLem2.5}. Using this technique, we can biholomorphically fit our domain $\Omega$ inside a boundary neighborhood $\widehat{\Omega}$ which, itself, is biholomorphic to a bounded domain in $\C^n$.

%%%%%%%%%%%%%%%%%%%%%%%%%%%%%%%%%%%%%%%%%%%%%%%%%%%%%%%%%%%%%%%%%%%%%%%%%%%%
\section{Proof of Theorem \ref{dissthm}}
\label{ProofSec}

Here we present the proof of our main result. We first use the aforementioned rescaling argument of Frankel (Section \ref{GeoSec}) to show that $\Omega\subset M^n$ is biholomorphic to a domain $D\subset\C^n$ such that     
$$\mathcal{P}_{\alpha}\subset D\subset\mathcal{P}_{\beta}$$
for some $0<\beta<\alpha$, similar to Zimmer's use of the argument. However, in the construction of the rescaling maps, we use the localization technique from Section \ref{LWSec}. Next, we show that $\Aut(D)$ contains the one-parameter subgroup
$$u_t(z_1,\dots,z_n)=(z_1+it,z_2,\dots,z_n).$$
This step is a verification of Zimmer's calculations using the new rescaling maps. Most of the argument is technical, and, while necessary, is ultimately uninteresting.

Once we verify the computations, we appeal directly to Zimmer's results to complete the proof.

\subsection{Rescaling the Domain}

\begin{lemma}[Lemma 4.5 in \cite{Zimmer}]
\label{ZimLem4.5}
Let $\Omega\subset\C^n$ be a bounded domain with $C^{1,1}$ boundary.  After applying an affine transformation, we can assume that $0\in\partial\Omega$ and
$$B_r(re_1)\subset\Omega\subset B_1(e_1)$$
for some $r\in(0,1)$.
\end{lemma}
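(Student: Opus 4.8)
The plan is to exploit the $C^{1,1}$ regularity of $\partial\Omega$ to find a boundary point with a good interior and exterior ball condition, then normalize via an affine map. First I would recall the standard consequence of $C^{1,1}$ regularity: a bounded domain with $C^{1,1}$ boundary satisfies both an interior and an exterior ball condition with a \emph{uniform} radius $\rho>0$. Concretely, for every $q\in\partial\Omega$ there is a ball $B_\rho(c_q^{\mathrm{int}})\subset\Omega$ with $q\in\partial B_\rho(c_q^{\mathrm{int}})$ and a ball $B_\rho(c_q^{\mathrm{ext}})\subset\C^n\setminus\overline\Omega$ with $q\in\partial B_\rho(c_q^{\mathrm{ext}})$; this is exactly the regularity content that distinguishes $C^{1,1}$ from $C^1$, and it is where the hypothesis is used. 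Since $\Omega$ is bounded, its closure is compact, so I can also fix a large ball containing $\overline\Omega$.

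Next I would pick any boundary point $q_0\in\partial\Omega$ (for instance, a point of $\partial\Omega$ at maximal distance from some fixed interior point, which automatically sits on the boundary of a large ball containing $\Omega$, though this is not strictly needed). Using the interior ball condition at $q_0$, let $B_\rho(c)\subset\Omega$ with $q_0$ on its boundary. Now apply the affine transformation that is the composition of a translation sending $c$ to $\rho e_1$ followed by a unitary rotation arranging that $q_0$ is sent to the origin; concretely, after a translation we may assume $c=\rho e_1$ and $q_0$ lies on $\partial B_\rho(\rho e_1)$, and a unitary map fixing $B_\rho(\rho e_1)$ can rotate $q_0$ to the origin $0$. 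After this normalization we have $B_\rho(\rho e_1)\subset\Omega$ and $0\in\partial\Omega$. Finally, compose with the dilation $z\mapsto z/S$ for a suitable scaling $S>0$ so that the outer containing ball becomes $B_1(e_1)$: since $\overline\Omega$ is compact and $0\in\partial\Omega$, after scaling we can arrange $\Omega\subset B_1(e_1)$ while the inner ball becomes $B_r(re_1)$ with $r=\rho/S\in(0,1)$; shrinking $r$ further if necessary (any smaller concentric ball still lies in $\Omega$) we may assume $r\in(0,1)$ strictly.

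The one genuine subtlety — and the step I would treat most carefully — is producing the \emph{outer} ball $B_1(e_1)$ tangent to $\partial\Omega$ at the same point $0$ where the inner ball is tangent. The cleanest route is: at the normalization point $0\in\partial\Omega$, the $C^{1,1}$ exterior ball condition already gives an exterior ball of radius $\rho$ tangent at $0$, hence $\Omega$ lies locally on one side; globally one then uses that $\Omega$ is bounded together with a scaling to fit $\Omega$ inside the ball $B_1(e_1)$ whose boundary passes through $0$ with the correct inward normal. In fact, one does not need the outer ball to be tangent in a strong sense — the statement only asserts the chain of inclusions $B_r(re_1)\subset\Omega\subset B_1(e_1)$ — so it suffices that $0\in\partial B_1(e_1)$ and $\overline\Omega\subset\overline{B_1(e_1)}$, which follows by choosing the dilation factor $S$ large enough (recalling $\overline\Omega$ is compact and all of $\Omega$ lies within bounded distance of $0$). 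Thus the affine transformation is the composite of a translation, a unitary rotation, and a dilation, and the resulting inclusions hold with $r\in(0,1)$.
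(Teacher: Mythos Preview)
Your argument has a genuine gap in the step producing the outer inclusion $\Omega\subset B_1(e_1)$. After you normalize so that $B_\rho(\rho e_1)\subset\Omega$ and $0\in\partial\Omega$, you propose dilating by $1/S$ with $S$ large; for the dilated domain to sit inside $B_1(e_1)$ you need the original domain to satisfy $\Omega\subset B_S(Se_1)$. But the balls $B_S(Se_1)$ are all tangent at $0$ to the real hyperplane $\{\re(z_1)=0\}$ and, as $S\to\infty$, exhaust only the half-space $\{\re(z_1)>0\}$. Boundedness of $\Omega$ does \emph{not} force $\Omega$ into any such ball: a kidney-bean-shaped $C^{1,1}$ domain with $0\in\partial\Omega$ and inward normal $e_1$ at $0$ can perfectly well contain points with $\re(z_1)<0$, and then $\Omega\not\subset B_S(Se_1)$ for every $S>0$. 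The exterior ball condition you invoke is only local and cannot repair this global failure.

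The fix is exactly the idea you mention parenthetically and then dismiss as ``not strictly needed'': it is needed, and it is how the paper proceeds. The paper secures the \emph{outer} ball first. One translates so $e_1\in\Omega$, picks $p_0\in\partial\Omega$ maximizing $|p-e_1|$, and rotates/scales about $e_1$ to send $p_0$ to $0$; the maximality immediately gives $\Omega\subset B_1(e_1)$. Because $\partial\Omega$ and $\partial B_1(e_1)$ are tangent at $0$, the inward normal $n_\Omega(0)$ is forced to equal $e_1$, and only then does the uniform interior ball condition from $C^{1,1}$ regularity yield $B_r(re_1)\subset\Omega$. The order matters: doing the inner ball first, as you do, loses control of where the outer containing ball is centered.
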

\begin{proof}
By translating, we can assume that $e_1\in\Omega$. Choose a boundary point $p_0\in\partial\Omega$ such that 
$$|p_0-e_1|=\max\{|p-e_1| \mid p\in\partial\Omega\}.$$
By rotating and scaling $\Omega$ about $e_1$, we may assume that $p_0=0$. Then $\Omega\subset B_1(e_1)$.

Next, for $p\in\partial\Omega$, let $n_{\Omega}(p)$ be the inward pointing normal unit vector at $p$. Since the boundary of $\Omega$ is $C^{1,1}$, there exists $r>0$ such that 
$$B_r(p+rn_{\Omega}(p))\subset\Omega$$
for every $p\in\partial\Omega$. So, since $0\in\partial\Omega$, by assumption, we have that $n_{\Omega}(0)=e_1$. Thus $B_r(re_1)\subset\Omega$.
\end{proof}

The following result is due to Deng et al., \cite{DGZ}; it is a higher dimensional analogue of Hurwitz's theorem.

\begin{theorem}[Theorem 2.2 in \cite{DGZ}]
\label{ZimThm2.6}
Suppose that $\Omega\subset\C^n$ is a bounded domain and let $x\in\Omega$. Let $f_j:\Omega\to\C^n$ be a sequence of injective holomorphic maps such that $f_j(x)=0$ for all $j$, and $f_j$ converges local uniformly to a map $f:\Omega\to\C^n$. If there exists $\varepsilon>0$ such that $B_{\varepsilon}(0)\subset f_j(\Omega)$ for all $j$, then $f$ is injective.
\end{theorem}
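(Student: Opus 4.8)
The plan is to reduce the statement to a Hurwitz-type dichotomy and then use the hypothesis $B_\varepsilon(0)\subset f_j(\Omega)$ to exclude the degenerate alternative. The dichotomy I would invoke is: \emph{if $f_j:\Omega\to\C^n$ are injective holomorphic maps on a domain $\Omega\subset\C^n$ converging locally uniformly to $f$, then either $f$ is injective or $\det f'\equiv 0$ on $\Omega$.} Granting this, the theorem follows as soon as we rule out $\det f'\equiv 0$, which is precisely where the ball hypothesis is used.

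To prove the dichotomy I would start from the classical fact that an injective holomorphic map between domains of $\C^n$ is a biholomorphism onto its (open) image, so each Jacobian $\det f_j'$ is nowhere vanishing. Since $\det f_j'\to\det f'$ locally uniformly, restricting to an arbitrary affine complex line and applying the one-variable Hurwitz theorem shows that on the connected domain $\Omega$ the limit $\det f'$ is either identically zero or nowhere zero. In the latter case $f$ is everywhere a local biholomorphism, and I would get injectivity from elementary degree theory: if $f(a)=f(b)=c$ with $a\ne b$, choose disjoint closed balls $\overline{B_\rho(a)},\overline{B_\rho(b)}\subset\Omega$ so small that $c\notin f(\partial B_\rho(a))\cup f(\partial B_\rho(b))$; then the topological degree of $f$ at $c$ on each ball equals $1$, and since $f_j\to f$ uniformly on these compact spheres, homotopy invariance of the degree produces, for large $j$, points $a_j\in B_\rho(a)$ and $b_j\in B_\rho(b)$ with $f_j(a_j)=c=f_j(b_j)$ and $a_j\ne b_j$, contradicting injectivity of $f_j$.

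It remains to exclude $\det f'\equiv 0$, and this is the step I expect to be the technical heart of the argument. Since each $f_j$ is a biholomorphism onto $f_j(\Omega)\supset B_\varepsilon(0)$, the inverse maps $g_j:=f_j^{-1}$ restrict to holomorphic maps $B_\varepsilon(0)\to\Omega$ with $g_j(0)=x$, and they are uniformly bounded because $\Omega$ is bounded; by Montel's theorem a subsequence converges locally uniformly to a holomorphic $g:B_\varepsilon(0)\to\overline{\Omega}$ with $g(0)=x\in\Omega$. Choosing $\delta>0$ so that $g(\overline{B_\delta(0)})$ is a compact subset of $\Omega$, we get $g_j(\overline{B_\delta(0)})\subset K$ for a fixed compact $K\subset\Omega$ and all large $j$; since $f_j\to f$ uniformly on $K$, the compositions $f_j\circ g_j$ converge uniformly on $B_\delta(0)$ to $f\circ g$. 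But $f_j\circ g_j=\Id$, so $f\circ g=\Id$ on $B_\delta(0)$; differentiating yields $\det f'(g(w))\det g'(w)=1$, so $g$ is a local biholomorphism there, $g(B_\delta(0))$ is a nonempty open subset of $\Omega$ on which $\det f'$ never vanishes, and hence $\det f'\not\equiv 0$ by the identity theorem. The delicate point is exactly this passage to the limit: one must know that the limiting inverse branch $g$ does not escape to $\partial\Omega$ near the origin — which is guaranteed since $g(0)=x$ is an interior point — after which the identity $f_j\circ g_j=\Id$ survives in the limit on a small ball. Combining the two parts, $\det f'\not\equiv 0$, and therefore $f$ is injective.
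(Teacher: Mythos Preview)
The paper does not give its own proof of this statement; it quotes the theorem from Deng--Guan--Zhang \cite{DGZ} and uses it as a black box. So there is nothing to compare against, and the relevant question is simply whether your argument is correct.

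It is. The two-step strategy --- first establishing the Hurwitz-type dichotomy ``$f$ injective or $\det f'\equiv 0$'', then using the uniformly bounded local inverses $g_j=f_j^{-1}|_{B_\varepsilon(0)}$ to rule out the degenerate alternative --- is exactly the standard route to results of this kind and each piece is sound. A couple of points worth tightening: (i) the line-restriction argument for the dichotomy on $\det f'$ is correct but should be phrased as a proof by contradiction --- if $\det f'(p)=0$ yet $\det f'\not\equiv 0$, pick a complex line $L$ through $p$ on which $\det f'|_L$ is not identically zero near $p$, and then one-variable Hurwitz on the connected component of $L\cap\Omega$ containing $p$ forces a zero of $\det f_j'$ nearby; (ii) in the inverse-branch step, the passage from $g(\overline{B_\delta(0)})\subset\Omega$ to $g_j(\overline{B_\delta(0)})\subset K$ for a fixed compact $K\subset\Omega$ and all large $j$ uses that $g(\overline{B_\delta(0)})$ is compact in the open set $\Omega$, hence has a compact neighborhood $K\subset\Omega$, together with uniform convergence $g_j\to g$ on $\overline{B_\delta(0)}$. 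With those two sentences added your proof is complete and self-contained.
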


Recall that the set $\mathcal{P}_{\alpha}$ for $\alpha>0$ is defined to be
$$\mathcal{P}_{\alpha}=\left\{(z_1,\dots,z_n)\in\C^n\mid\re(z_1)>\alpha\sum_{j=2}^{n}|z_j|^2\right\}.$$
The main result of this section is the following:

\begin{proposition}
\label{MRProof1}
Let $M^n$ be a taut manifold, and let $\Omega\subset M$ be a subdomain with nonempty boundary, and assume $\partial\Omega$ is $C^{1,1}$. Suppose there exists a totally real boundary point $p\in\partial\Omega$. Suppose further that $\Omega$ covers a compact complex manifold. Then $\Omega$ is biholomorphic to a domain $D\subset\C^n$ such that
$$\mathcal{P}_{\alpha}\subset D\subset\mathcal{P}_{\beta}.$$
\end{proposition}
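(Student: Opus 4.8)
The plan is to carry out Zimmer's rescaling argument for Theorem~\ref{ZimThm1.1}, with the normalized position of $\Omega$ inside $\C^n$ replaced by a boundary neighborhood of $p$ produced by the localization of Section~\ref{LWSec}. First I would manufacture the rescaling data. Since $\Omega$ covers a compact manifold it admits a compact quotient, so Lemma~\ref{LWLem2.1} supplies a fundamental set $K$ for $\Aut(\Omega)$, and Proposition~\ref{ZimProp3.1} makes $d^K_\Omega$ proper, hence complete. Taking a sequence in $\Omega$ converging to $p$, factoring it through $K$ and $\Aut(\Omega)$, and combining completeness of $d^K_\Omega$ with a normal-families argument, one obtains a point $x_0\in\Omega$ and a sequence $\{m_j\}\subset\Aut(\Omega)$ with $m_j(x_0)\to p$; Lemma~\ref{LWLem2.5} then gives $m_j(y)\to p$ for every $y\in\Omega$ and, crucially, that for every compact $K'\subset\Omega$ and every boundary neighborhood $\widehat\Omega=\Omega\cap U$ of $p$ one has $m_j(K')\subset\widehat\Omega$ once $j$ is large. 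I would fix such a $U$ small enough to carry a holomorphic chart $\psi\colon U\hookrightarrow\C^n$ and, after replacing $\widehat\Omega$ by a domain that agrees with $\Omega$ near $p$ and whose image $\Omega_0:=\psi(\widehat\Omega)$ is a bounded domain with $C^{1,1}$ boundary (with the image of $p$ on the $C^{1,1}$ part), apply an affine normalization adapted to that point (cf.\ Lemma~\ref{ZimLem4.5}) so that $B_r(re_1)\subset\Omega_0\subset B_1(e_1)$. Then $f_j:=\psi\circ m_j$ is, on each fixed compact subset of $\Omega$ and for $j$ large, an injective holomorphic map into $\Omega_0$, its image domain $f_j\!\left(m_j^{-1}(\widehat\Omega)\right)$ is all of $\Omega_0$, and $f_j(x_0)\to p_0\in\partial\Omega_0$. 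These $f_j$ will play the role that automorphisms of $\Omega$ escaping to a boundary point play in Zimmer's proof.

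Next I would rescale. Using the $C^{1,1}$, hence two-sided osculating-ball, regularity of $\partial\Omega_0$ at $p_0$, choose linear maps $\Lambda_j$ of the form in Example~\ref{Zex4.4}, adapted to the feet and the heights $\varepsilon_j\to0$ of the points $f_j(x_0)$, and set $F_j:=\Lambda_j\circ f_j=\Lambda_j\circ\psi\circ m_j$. Then $F_j$ is eventually defined on $\Omega$, injective, holomorphic, with $F_j(x_0)\to e_1$, and with image domain $\Lambda_j(\Omega_0)$. The interior osculating ball at $p_0$, Example~\ref{Zex4.4}, and the monotonicity of Proposition~\ref{ZimProp2.3} give, after passing to a subsequence, $\Lambda_j(\Omega_0)\to D$ in the local Hausdorff topology (the rescaled domains being squeezed between the convex paraboloid approximants, so the limit is a convex domain, exactly as in Zimmer) with $\mathcal{P}_\alpha\subset D$. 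The complementary bound $D\subset\mathcal{P}_\beta$ for some $0<\beta<\alpha$ is then obtained as in Zimmer's computation, combining the exterior osculating ball and the inclusion $\Omega_0\subset B_1(e_1)$ with convexity of $D$ and completeness of $d^K_D$ (Proposition~\ref{ZimProp3.1}, since $D$ again covers a compact manifold). Finally, to identify $\Omega$ with $D$: by Theorem~\ref{ZimThm4.2} and completeness the family $\{F_j\}$ is normal; a locally uniform limit $F\colon\Omega\to\C^n$ is injective by Theorem~\ref{ZimThm2.6} (the interior osculating ball ensures $\Lambda_j(\Omega_0)$ contains a fixed ball about $e_1$, which is the hypothesis of that theorem), and a standard argument using Theorem~\ref{ZimThm4.2} identifies $F(\Omega)$ with $D=\lim\Lambda_j(\Omega_0)$. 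Hence $\Omega$ is biholomorphic to $D$ and $\mathcal{P}_\alpha\subset D\subset\mathcal{P}_\beta$.

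The hard part will be the geometric squeeze $\mathcal{P}_\alpha\subset D\subset\mathcal{P}_\beta$. The inner inclusion is immediate from the interior osculating ball at $p$ and Example~\ref{Zex4.4}. The outer inclusion with $\beta>0$ is the delicate point: it cannot be read off the $C^{1,1}$ condition at $p$ alone (which by itself only bounds $D$ by a paraboloid with non-positive parameter), and must be extracted, following Zimmer, from the convexity of the rescaled limit together with the fact that $D$ covers a compact manifold. The genuine technical burden, however, is the bookkeeping of the first two steps --- constructing the chart and the $C^{1,1}$ modification of $\widehat\Omega$, and choosing the linear maps $\Lambda_j$ through the localization --- so that Zimmer's estimates, originally written for automorphisms of a bounded domain in $\C^n$, apply verbatim to the maps $F_j=\Lambda_j\circ\psi\circ m_j$. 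Once that is arranged, the proposition is Zimmer's.
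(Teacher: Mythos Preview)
Your overall strategy --- run Zimmer's rescaling through the Lin--Wong localization --- is precisely the paper's. The divergence is in how the rescaling maps are built, and that is where your plan develops a real gap.

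The paper does not center the rescaling at the image of $p$. It applies Lemma~\ref{ZimLem4.5} to the whole bounded domain $\widetilde{\Omega}=\phi(\widehat{\Omega})\subset\C^n$, obtaining $B_r(re_1)\subset\widetilde{\Omega}\subset B_1(e_1)$ with both balls tangent at one point $0\in\partial\widetilde{\Omega}$ (not $\phi(p)$); it then inserts automorphisms $g_j\in\Aut(\widetilde{\Omega})$ carrying $\phi(m_j(k_j))$ to $r_je_1$ and sets $F_j=\Lambda_j\circ g_j\circ\phi\circ m_j$. Because the parabolic dilations $\Lambda_j$ are now centered at $0$, the sandwich $B_r(re_1)\subset\widetilde{\Omega}\subset B_1(e_1)$ survives the rescaling and, via Example~\ref{Zex4.4}, yields \emph{both} inclusions $\mathcal{P}_{1/(2r)}\subset D\subset\mathcal{P}_{1/2}$ at once. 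In the paper $D$ is simply defined as $F(\Omega)$, not as a local Hausdorff limit of $\Lambda_j(\widetilde{\Omega})$, and there is no separate ``delicate'' step for the outer paraboloid.

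Your route --- centering $\Lambda_j$ at the foot of $f_j(x_0)$ near $\psi(p)$ and reading the bounds off osculating balls there --- forfeits this. You are right that the exterior osculating ball at $\psi(p)$ rescales only to the complement of a left-opening paraboloid, so it does not give $D\subset\mathcal{P}_\beta$ with $\beta>0$. But your proposed remedy is not sound: the limit domain $D$ is \emph{not} convex (being pinched between convex sets does not make a set convex, and neither the paper nor Zimmer claims convexity of $D$), Proposition~\ref{ZimProp2.3} on Bergman-kernel monotonicity plays no role whatsoever at this stage, and ``convexity of $D$ plus cocompactness'' is not an argument that produces $D\subset\mathcal{P}_\beta$. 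The fix is the paper's device: either introduce the intermediate automorphisms $g_j$ so that the dilations are centered at the tangency point of the global sandwich from Lemma~\ref{ZimLem4.5}, or else modify the boundary neighborhood so that $\Omega_0$ itself lies inside a ball tangent to $\partial\Omega_0$ at $\psi(p)$, whereupon your $\Lambda_j$ centered near $\psi(p)$ again gives the outer paraboloid for free.
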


\begin{proof}
Suppose that $M^n$ is a taut manifold. Let $\Omega\subset M$ be a bounded domain with $C^{1,1}$ boundary admitting a compact quotient. Let $K\subset\Omega$ be the fundamental set of $\Aut(\Omega)$, i.e., $K$ is compact and $\Aut(\Omega)\cdot K=\Omega$. Let $p\in\partial\Omega$ be totally real, and let $\widehat{\Omega}$ be a boundary neighborhood of $p$. Note that $\widehat{\Omega}$ is biholomorphic to a bounded domain $\widetilde{\Omega}\subset\C^n$ (see Remark \ref{BndryDomainisDomain}). Let $\phi$ be the biholomorphism.

By Lemma \ref{ZimLem4.5}, we can assume that 
$$B_r(re_1)\subset\widetilde{\Omega}\subset B_1(e_1)$$
for some $r\in(0,1)$. Fix a sequence $p_j\in\widehat{\Omega}$ converging to $p$. By Lemma \ref{LWLem2.4}, there exists a sequence $\{m_j\}\subset\Aut(\Omega)$ such that $m_j(x)\to p$ for some $x\in\Omega$. Then by Lemma \ref{LWLem2.5}, for any compact set $L\subset\Omega$, $m_j(L)\subset\widehat{\Omega}$ for sufficiently large $j$. In particular, this holds for the fundamental set $K$. So, fix a sequence $k_j\in K$ such that $m_j(k_j)=p_j$.

Next, fix a sequence $r_j\in(0,r)$ converging to $0$. Since $\Aut(B_1(e_1))$ acts transitively on $B_1(e_1)$, we may pick $g_j\in\Aut(B_1(e_1))$ such that $g_j(\phi(p_j))=r_je_1$. The restriction of $g_j$ to $\widetilde{\Omega}$ is a biholomorphism from $\widetilde{\Omega}$ onto the domain $g_j(\widetilde{\Omega})\subset B_1(e_1)$. Consider the $\widetilde{\Omega}$-dilations
$$\Lambda_j(z_1,\dots,z_n)=\left(\frac{1}{r_j}z_1,\frac{1}{\sqrt{r_j}}z_2,\dots,\frac{1}{\sqrt{r_j}}z_n\right).$$
Let $\widetilde{\Omega}_j:=\Lambda_j(\widetilde{\Omega})$ and $F_j:=\Lambda_j\circ g_j\circ\phi\circ m_j:\Omega\to\widetilde{\Omega}_j$. Then 
$$\Lambda_j\left(B_r(re_1)\right)\subset\widetilde{\Omega}_j\subset\Lambda_j\left(B_1(e_1)\right).$$
Moreover, by Example \ref{Zex4.4}, 
$$\mathcal{P}_{\alpha}=\lim_{j\to\infty}\Lambda_j\left(B_r(re_1)\right), \ \text{where} \ \alpha=\frac{1}{2r}$$
and 
$$\mathcal{P}_{\beta}=\lim_{j\to\infty}\Lambda_j\left(B_1(e_1)\right), \ \text{where} \ \beta=\frac{1}{2}$$
in the local Hausdorff topology.

We claim now that, after passing to a subsequence if necessary, $F_j$ converges to a holomorphic embedding $F:\Omega\to\C^n$. Furthermore, if $D=F(\Omega)$, then 
$$\mathcal{P}_{\alpha}\subset D\subset\mathcal{P}_{\beta}.$$

By construction, $F_j(k_j)=e_1$, and the decreasing property of the Kobayashi pseudodistance implies that 
$$d^K_{\widetilde{\Omega}_j}(z,w)\le d^K_{\Lambda_j\left(B_1(e_1)\right)}(z,w)$$
for all $z,w\in\widetilde{\Omega}_j$. Theorem \ref{ZimThm4.2} implies that $d^K_{\Lambda_j\left(B_1(e_1)\right)}$ converges to $d^K_{\mathcal{P}_{\beta}}$ locally uniformly. So by the Arzel\'a-Ascoli theorem, we can pass to a subsequence where $F_j$ converges locally uniformly to a holomorphic map $F:\Omega\to\C^n$. 

Let $D=F(\Omega)$. Since 
$$\Lambda_j\left(B_r(re_1)\right)\subset\widetilde{\Omega}_j\subset\Lambda_j\left(B_1(e_1)\right)$$
for each $j$, it follows that 
$$\overline{\mathcal{P}_{\alpha}}\subset D\subset\overline{\mathcal{P}_{\beta}}.$$

It remains to show that $F$ is injective. We will do this using Theorem \ref{ZimThm2.6}. Since 
$$\mathcal{P}_{\alpha}=\lim_{j\to\infty}\Lambda_j\left(B_r(re_1)\right)$$
in the local Hausdorff topology and $\Lambda_j\left(B_r(re_1)\right)\subset\widetilde{\Omega}_j$ for each $j$, there exists $\varepsilon>0$ such that 
$$B_{\varepsilon}(e_1)\subset F_j(\Omega)$$
for each $j$. Passing to a subsequence if necessary, we can suppose $k_j\to k\in K$. Then consider the maps
$$G_j(q)=F_j(q)-F_j(k).$$
We will show that these maps are injective in order to invoke Theorem \ref{ZimThm2.6}.

Since 
$$\lim_{j\to\infty}F_j(k)=\lim_{j\to\infty}F_j(k_j)=F(k)=e_1,$$
$G_j$ converges locally uniformly to $F-e_1$. Moreover, by passing to another subsequence, we can assume that $\|e_1-F_j(k)\|<\frac{\varepsilon}{2}$ for each $j$. Then for every $j$, the map $G_j$ is injective, $G_j(k)=0$, and 
$$B_{(\varepsilon/2)}\left(0\right)\subset G_j(\Omega).$$
So by Theorem \ref{ZimThm2.6}, $F$ is injective. Thus $F$ is an embedding. 

Since $F$ is an embedding, $D$ is an open set, and so we have 
$$\mathcal{P}_{\alpha}=\Int(\overline{\mathcal{P}_{\alpha}})\subset D\subset \Int(\overline{\mathcal{P}_{\beta}})=\mathcal{P}_{\beta}.$$
\end{proof}

\subsection{The One-Parameter Subgroup}

We now show that $\Aut(D)$ contains the one-parameter subgroup.
\[
u_t(z_1,z_2,\dots,z_n)=(z_1+it,z_2,\dots,z_n)
\]

\begin{lemma}
\label{ZimLem4.7}
Let $\widetilde{\Omega}_j$ be as in the proof of Proposition \ref{MRProof1}. Suppose $z_j$ is a sequence such that $z_j\in\widetilde{\Omega}_j$ for each $j$, $z_j\to z$, and 
$$\liminf_{j\to\infty}d_{\widetilde{\Omega}_j}^K(e_1,z_j)<\infty.$$
Then $z\in D$.
\end{lemma}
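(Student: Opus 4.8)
The plan is to show that a point $z$ obtained as a limit of points $z_j \in \widetilde{\Omega}_j$ at bounded Kobayashi distance from $e_1$ must land in $D = F(\Omega)$. The natural strategy is to realize $z$ as $F(q)$ for some $q \in \Omega$, using the structure of the rescaling maps $F_j = \Lambda_j \circ g_j \circ \phi \circ m_j$ and the normal-family/properness machinery already set up. First I would use the hypothesis $\liminf_j d^K_{\widetilde{\Omega}_j}(e_1, z_j) < \infty$ together with $F_j(k_j) = e_1$ to control the preimages: set $q_j := F_j^{-1}(z_j) \in \Omega$, and observe that since $F_j$ is (for large $j$) a biholomorphism onto its image and the Kobayashi distance is decreasing under holomorphic maps, $d^K_\Omega(k_j, q_j) \le d^K_{\widetilde{\Omega}_j}(F_j(k_j), z_j) = d^K_{\widetilde{\Omega}_j}(e_1, z_j)$, wait — one must be careful about the direction, so instead I would pull back via the inverse: $F_j : \Omega \to \widetilde{\Omega}_j$ is a biholomorphism, hence an isometry for the respective Kobayashi distances, so $d^K_\Omega(k_j, q_j) = d^K_{\widetilde{\Omega}_j}(e_1, z_j)$, which by hypothesis stays bounded along a subsequence.

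Next I would invoke properness. Since $\Omega$ admits a compact quotient and is hyperbolic, Proposition~\ref{ZimProp3.1} (applied after transporting to $\widetilde{\Omega} \subset \C^n$, or directly via the compact-quotient structure) tells us that $(\Omega, d^K_\Omega)$ is a proper metric space, so the $d^K_\Omega$-ball of the bounded radius around the convergent sequence $k_j \to k \in K$ is relatively compact; hence, passing to a further subsequence, $q_j \to q \in \Omega$. Then by the local uniform convergence $F_j \to F$ established in Proposition~\ref{MRProof1}, we get $z = \lim_j z_j = \lim_j F_j(q_j) = F(q) \in F(\Omega) = D$. This is the heart of the argument; the remaining points are bookkeeping about subsequences and making sure $q_j$ is well-defined, i.e. that $z_j$ lies in the image $F_j(\Omega) = \widetilde{\Omega}_j$, which holds by construction.

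The step I expect to be the main obstacle is the control of $d^K_\Omega(k_j, q_j)$, specifically the interplay between the moving base point $e_1 = F_j(k_j)$ (note $F_j(k_j) = e_1$ exactly, but $F_j(k) \to e_1$ only approximately) and the uniform bound on the distance: I need the $\liminf$ hypothesis to genuinely pin $q_j$ into a fixed compact set independent of $j$, which requires that the distances $d^K_{\widetilde{\Omega}_j}$ do not degenerate and that the reference points $k_j$ stay in the compact set $K$ — both of which are available from the earlier construction, but must be assembled carefully. A secondary subtlety is that $F_j$ need not be globally injective for small $j$; this is harmless since we only need large $j$, and injectivity of each $F_j$ for large $j$ follows from the argument with $G_j$ in the proof of Proposition~\ref{MRProof1} (or simply because $F_j$ is a composition of a biholomorphism $m_j$, a biholomorphism $\phi$, an automorphism $g_j$, and the linear isomorphism $\Lambda_j$, hence is itself a biholomorphism onto $\widetilde{\Omega}_j$ — so in fact every $F_j$ is injective). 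Once $q_j \to q$ is secured, the conclusion $z = F(q) \in D$ is immediate.
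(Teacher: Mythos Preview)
Your proof is essentially the same as the paper's: both pull back $z_j$ through $F_j^{-1}$ to obtain preimages in $\Omega$, bound their Kobayashi distance from the compact fundamental set $K$ via the hypothesis, invoke properness (Proposition~\ref{ZimProp3.1}) to extract a convergent subsequence, and then pass to the limit using $F_j \to F$ locally uniformly. One small correction: $F_j$ is only a biholomorphism from the subdomain $m_j^{-1}(\widehat{\Omega})\subset\Omega$ onto $\widetilde{\Omega}_j$ (since $\phi$ is defined only on $\widehat{\Omega}$), so your equality $d^K_\Omega(k_j,q_j)=d^K_{\widetilde{\Omega}_j}(e_1,z_j)$ should be the inequality $\le$, obtained by composing the isometry $F_j^{-1}:\widetilde{\Omega}_j\to m_j^{-1}(\widehat{\Omega})$ with the distance-decreasing inclusion $m_j^{-1}(\widehat{\Omega})\hookrightarrow\Omega$ --- which is exactly the direction you need.
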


\begin{proof}
Fix $z_0\in\Omega\subset M$ and let 
$$P=\max_{k\in K}d_{\Omega}^{K}(z_0,k).$$
Pick $j_s\to\infty$ such that 
$$Q=\lim_{s\to\infty}d_{\widetilde{\Omega}_{j_s}}^K(e_1,z_{j_s})<\infty.$$
Let $F_j:\Omega\to\widetilde{\Omega}_j$ and $k_j\in K$ be as in the proof of Proposition \ref{MRProof1}. Since $F_j(k_j)=e_1$, for each $s$, there exists
$$w_s\in\{y\in\Omega\mid d_{\Omega}^K(z_0,y)\le P+Q\}$$
such that $F_{j_s}(w_s)=z_{j_s}$. By Proposition \ref{ZimProp3.1}, $d_{\Omega}^K$ is proper. So we can pass to a subsequence where $w_s\to w\in\Omega$. Since $F_j\to F$ local uniformly, we have
$$F(w)=\lim_{s\to\infty}F_{j_s}(w_s)=\lim_{s\to\infty}z_s=z.$$
Thus $z\in F(\Omega)=D$. 
\end{proof}

The next lemma is highly technical, and serves only as a tangible representation of an obvious fact.  We refer the reader to \cite{Zimmer} for the proof.

\begin{lemma}[Lemma 4.8 in \cite{Zimmer}]
\label{ZimLem4.8}
Let $\widetilde{\Omega}$ be as in the proof of Proposition \ref{MRProof1}. For every $m>0$, there exists $\delta_m>0$ such that if $z_0\in\widetilde{\Omega}\cap B_{\delta_m}(0)$, $T>0$, and 
$$\{z_0+xe_1\mid-T<x<T\}\subset\widetilde{\Omega},$$
then
$$\left\{z_0+(x+iy)e_1 \ \Bigg| \ -\frac{T}{2}\le x\le\frac{T}{2}, -mT\le y\le mT\right\}\subset\widetilde{\Omega}.$$
\end{lemma}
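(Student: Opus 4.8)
# Proof Proposal for Lemma \ref{ZimLem4.8}

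The plan is to reduce the statement to a one-dimensional estimate on the disk, where the hypothesis that $\widetilde{\Omega}$ has $C^{1,1}$ boundary and contains the ball $B_r(re_1)$ can be leveraged. First I would observe that the claim is really about the complex line $\ell_{z_0} = \{z_0 + \zeta e_1 \mid \zeta \in \C\}$ through $z_0$ in the $e_1$-direction: we are given a real segment of length $2T$ in $\widetilde{\Omega} \cap \ell_{z_0}$ and we want to fatten it into a genuine two-dimensional rectangle still contained in $\widetilde{\Omega}$. Because $p_0 = 0 \in \partial\widetilde{\Omega}$ and $\widetilde{\Omega}$ sits between $B_r(re_1)$ and $B_1(e_1)$, when $z_0$ is very close to $0$ the segment necessarily reaches out toward the ``deep'' part of $\widetilde{\Omega}$ near $e_1$, and its right endpoint $z_0 + Te_1$ cannot be too far from $\widetilde{\Omega}$'s interior. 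The key geometric input is that a $C^{1,1}$ domain satisfies a uniform interior ball condition (already used in Lemma \ref{ZimLem4.5}): through every boundary point there is an interior ball of fixed radius $r$. This gives us a uniform ``cone'' or ``ice-cream'' region emanating inward, so that a segment hugging the $e_1$-axis near $0$ automatically has a definite-sized neighborhood inside $\widetilde{\Omega}$.

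The main steps, in order: (1) Fix $m > 0$. Using the interior ball condition and the inclusion $\widetilde{\Omega} \subset B_1(e_1)$, show that there is a constant $c = c(r) > 0$ such that for any point $w \in \widetilde{\Omega}$ with $\re(w_1) \le 1/2$, say, lying within distance $\delta$ of the $e_1$-axis, the one-dimensional slice $\widetilde{\Omega} \cap \ell_w$ contains a disk of radius comparable to $\mathrm{dist}(w, \partial\widetilde{\Omega})$, which in turn is bounded below in terms of how far $w$ has penetrated along $e_1$. (2) Quantify: if $\{z_0 + xe_1 \mid -T < x < T\} \subset \widetilde{\Omega}$ and $z_0 \in B_{\delta_m}(0)$, then the point $z_0 + T e_1$ has real first-coordinate roughly $T$ (up to the smallness of $\delta_m$), so it lies at distance $\gtrsim cT$ from $\partial\widetilde{\Omega}$ when $T$ is not too large, and at ``ball scale'' otherwise. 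The point is that the *width* available in imaginary directions at the midpoint of the segment scales linearly with $T$ as long as $z_0$ is close enough to the origin — this is where $\delta_m$ enters, chosen small enough (depending on $m$ and $r$) that the linear lower bound beats the factor $m$. (3) Convexity of the slice $\widetilde{\Omega} \cap \ell_{z_0}$ — or, if one does not want to assume $\widetilde{\Omega}$ convex here, a direct argument using that the slice contains both the long real segment and a disk of radius $\gtrsim cT$ centered near its midpoint — then forces the rectangle $\{z_0 + (x+iy)e_1 \mid |x| \le T/2, |y| \le mT\}$ to lie inside, after shrinking $\delta_m$ if necessary so that $mT \le$ (the guaranteed half-width).

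The hard part will be step (2): getting the *linear-in-$T$* lower bound on the transverse width, uniformly over all $z_0 \in B_{\delta_m}(0)$ and all admissible $T$, rather than just a positive bound depending on $z_0$ and $T$ separately. The subtlety is that $T$ itself can range from tiny (where ball-scale geometry near $0$ governs things and the interior ball condition gives width $\sim T$ directly) up to order $1$ (where we are near $e_1$ and have width $\sim 1 \ge mT/($const$)$). One must patch these regimes and check the constants compose correctly; the choice $\delta_m \sim r/(m+1)$ or similar should work, but verifying it requires care with the $C^{1,1}$ estimate and the containment $B_r(re_1) \subset \widetilde{\Omega} \subset B_1(e_1)$. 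Since the lemma is quoted from \cite{Zimmer} and labeled ``highly technical,'' I would present this sketch and defer the full computation to that reference, as the excerpt itself does.
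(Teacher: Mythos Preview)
The paper does not actually prove this lemma; it explicitly defers to \cite{Zimmer}. So there is no in-paper argument to compare your sketch against. Your instinct is right---the point is that $\partial\widetilde\Omega$ is tangent to $\{\re z_1=0\}$ at $0$ to second order---but two of your steps would not go through as written.

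In step (2) you conflate the length of the real segment with the Euclidean distance from its points to $\partial\widetilde\Omega$. The hypothesis says only that $z_0+xe_1\in\widetilde\Omega$ for $|x|<T$; it does \emph{not} give $\mathrm{dist}(z_0+Te_1,\partial\widetilde\Omega)\gtrsim T$, and in fact $|z_0|<\delta_m$ with $0\in\partial\widetilde\Omega$, so those distances are tiny. What is actually true (and useful) is that $T$ is automatically small: from $\widetilde\Omega\subset B_1(e_1)$ one checks $T\le \re(z_0)_1<\delta_m$, so only the small-$T$ regime ever occurs and there is no patching of scales to worry about. In step (3) your non-convex fallback---``the slice contains the segment and a disk of radius $\gtrsim cT$''---does not yield a rectangle; a segment together with a single disk is not a rectangle, and $\widetilde\Omega$ is not assumed convex, so the first option is unavailable too.

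The clean argument bypasses both issues. Write $\partial\widetilde\Omega$ near $0$ as a graph $x_1=\rho(y_1,z')$ with $\rho(0)=0$, $\nabla\rho(0)=0$, and $\nabla\rho$ Lipschitz with constant $L$ (this is exactly $C^{1,1}$). Writing $z_0=(a_1+ib_1,z_0')$, the segment hypothesis gives $a_1-T\ge\rho(b_1,z_0')$. For $|x|\le T/2$ and $|y|\le mT$ one has, by the mean value theorem and $T<\delta_m$,
\[
a_1+x-\rho(b_1+y,z_0')\;\ge\;(a_1-\tfrac{T}{2})-\rho(b_1,z_0')-L\bigl(|b_1|+|y|+|z_0'|\bigr)|y|\;\ge\;\tfrac{T}{2}-Lm(2+m)\,\delta_m\,T,
\]
which is positive once $\delta_m<\bigl(2Lm(2+m)\bigr)^{-1}$ and small enough that the graph representation is valid on $B_{(2+m)\delta_m}(0)$. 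This places the whole rectangle inside $\widetilde\Omega$ directly, with no appeal to convexity or interior-ball disks.
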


\begin{proposition}
\label{MRProof2}
Let $D$ be as in Proposition \ref{MRProof1}. Then $\Aut(D)$ contains the one-parameter subgroup
$$u_t(z_1,\dots,z_n)=(z_1+it,z_2,\dots,z_n).$$
In particular, $\Aut(\Omega)\cong\Aut(D)$ is nondiscrete.
\end{proposition}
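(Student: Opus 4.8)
The plan is to build the one-parameter subgroup $u_t$ as a limit of automorphisms of the rescaled domains $\widetilde\Omega_j$, exploiting the fact that dilating in the $z_1$-direction stretches short real segments through $re_1$ into long ones, while Lemma \ref{ZimLem4.8} upgrades long \emph{real} segments into genuine two-dimensional strips in the $z_1$-plane. Concretely, recall that $g_j\in\Aut(\widetilde\Omega)$ sends $\phi(p_j)$ to $r_je_1$, with $r_j\to 0$; I would first observe that since $r_je_1\to 0\in\partial\widetilde\Omega$ and $B_r(re_1)\subset\widetilde\Omega\subset B_1(e_1)$, the point $r_je_1$ lies deep inside $\widetilde\Omega$ near the origin, so for each fixed $T>0$ the real segment $\{r_je_1+xe_1:-T<x<T\}$ lies in $\widetilde\Omega$ once $j$ is large (its $z_1$-coordinate has real part in $(r_j-T,\ r_j+T)$, which one checks is compatible with membership after possibly shrinking to the standard exhaustion of $\widetilde\Omega$ near $0$). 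Feeding this into Lemma \ref{ZimLem4.8} with the weight $m$ arbitrary produces, for $z_0=r_je_1\in B_{\delta_m}(0)$, a strip $\{r_je_1+(x+iy)e_1: |x|\le T/2,\ |y|\le mT\}\subset\widetilde\Omega$.

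Next I would push this strip forward under the dilation $\Lambda_j$. Since $\Lambda_j$ multiplies the first coordinate by $1/r_j$, the strip through $r_je_1$ of imaginary half-width $mT$ in $\widetilde\Omega$ becomes a strip through $e_1$ of imaginary half-width $mT/r_j\to\infty$ inside $\widetilde\Omega_j=\Lambda_j(\widetilde\Omega)$. In particular, for any fixed $t\in\R$, the point $e_1+ite_1 = u_t(e_1)$ lies in $\widetilde\Omega_j$ for all large $j$, and moreover $d^K_{\widetilde\Omega_j}(e_1, e_1+ite_1)$ stays bounded as $j\to\infty$: indeed the strip contains a fixed Euclidean ball around $e_1$ whose radius grows, and the Kobayashi distance from the center of a ball to an interior point at fixed Euclidean distance shrinks as the ball grows, or alternatively one compares with $d^K_{\mathcal P_\beta}$ via Theorem \ref{ZimThm4.2}. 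Lemma \ref{ZimLem4.7} then applies: $u_t(e_1)\in D$ for every $t$. To get the full automorphism rather than just the orbit of $e_1$, I would consider the maps $\psi_j := \Lambda_j\circ v_t^{(j)}\circ\Lambda_j^{-1}$ restricted to $\widetilde\Omega_j$, where $v_t^{(j)}$ is a biholomorphism of $\widetilde\Omega_j$ (an element of $\Aut(D)$ conjugated appropriately, or constructed from $\Aut(\widetilde\Omega)$) — actually, more cleanly: $u_t$ is an affine map, so $\Lambda_j^{-1}\circ u_s\circ\Lambda_j = u_{sr_j}$ pre/post-composed trivially, and the point is that $u_{t}$ maps $\widetilde\Omega_j$ into $\widetilde\Omega_j$ for $j$ large (by the strip estimate applied to \emph{every} point of a given compact subset of $\widetilde\Omega_j$, using that such points, pulled back by $\Lambda_j^{-1}$, land in $B_{\delta_m}(0)\cap\widetilde\Omega$). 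Taking a limit of $u_t|_{\widetilde\Omega_j}$, conjugated through $F_j$, gives a holomorphic self-map $D\to\overline{\mathcal P_\beta}$; running the same argument with $-t$ gives an inverse, so by Cartan's uniqueness theorem (or the Hurwitz-type Theorem \ref{ZimThm2.6}) the limit is an automorphism of $D$, namely $u_t$, and $t\mapsto u_t$ is visibly a homomorphism.

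The main obstacle, I expect, is the uniformity in the strip construction: Lemma \ref{ZimLem4.8} requires the base point $z_0$ to lie in $\widetilde\Omega\cap B_{\delta_m}(0)$ and requires a \emph{real} segment of length $2T$ through $z_0$ inside $\widetilde\Omega$, and I need this simultaneously for all points in the $\Lambda_j^{-1}$-preimage of a fixed compact $L\subset\widetilde\Omega_j$; since $\Lambda_j^{-1}$ contracts the first coordinate by $r_j$ and the others by $\sqrt{r_j}$, the preimage of $L$ does shrink into $B_{\delta_m}(0)$, but one must verify the required real segments persist — this is where the inclusion $B_r(re_1)\subset\widetilde\Omega$ is used, since near $0$ the domain $\widetilde\Omega$ contains the ball $B_r(re_1)$, which near the origin looks like the paraboloid region and does contain long real $e_1$-segments through points with small coordinates. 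Once the uniform strip estimate is in hand, the convergence and the group-homomorphism property are routine, and the final sentence "$\Aut(\Omega)\cong\Aut(D)$ is nondiscrete" is immediate since $F$ conjugates $\Aut(\Omega)$ isomorphically onto $\Aut(D)$ (biholomorphisms transport automorphism groups) and $\{u_t:t\in\R\}$ is a nondiscrete subgroup of the latter.
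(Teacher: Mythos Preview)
Your core strategy---use Lemma \ref{ZimLem4.8} to upgrade real segments to strips, push forward by $\Lambda_j$, then invoke Lemma \ref{ZimLem4.7}---matches the paper, but your scaling is wrong in a way that breaks the argument. You claim that for each \emph{fixed} $T>0$ the segment $\{r_je_1+xe_1:-T<x<T\}$ lies in $\widetilde\Omega$ once $j$ is large. This is false: since $\widetilde\Omega\subset B_1(e_1)$, every point of $\widetilde\Omega$ has $\re(z_1)>0$, whereas your segment contains points with $\re(z_1)$ close to $r_j-T$, which is negative as soon as $r_j<T$. The correct move is to let $T$ shrink with $j$: take $T_j=r_j\varepsilon$ for a fixed $\varepsilon>0$. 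Lemma \ref{ZimLem4.8} then yields a strip of imaginary half-width $mr_j\varepsilon$ in $\widetilde\Omega$, which under $\Lambda_j$ becomes a strip of \emph{fixed} imaginary half-width $m\varepsilon$ in $\widetilde\Omega_j$---not one whose width blows up. Choosing $m$ so that $|t|<m\varepsilon$ is what places $w_0+ite_1$ inside that strip.

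There is also an unnecessary detour in your plan. The paper does not build $u_t$ as a limit of automorphisms of the $\widetilde\Omega_j$; it simply fixes an arbitrary $w_0\in D$ and $t\in\R$ and shows $w_0+ite_1\in D$. The inclusion $B_\varepsilon(w_0)\subset F_j(\Omega)\subset\widetilde\Omega_j$ for large $j$ (from local uniform convergence $F_j\to F$) pulls back under $\Lambda_j^{-1}$ to supply the short real segment of length $2r_j\varepsilon$ through $w_j:=\Lambda_j^{-1}(w_0)$ needed for Lemma \ref{ZimLem4.8}. The resulting fixed-size strip $W\subset\widetilde\Omega_j$ gives $d^K_{\widetilde\Omega_j}(w_0,w_0+ite_1)\le d^K_W(w_0,w_0+ite_1)$ uniformly in $j$, and Lemma \ref{ZimLem4.7} finishes. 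Since this holds for every $w_0$ and every $t$ (hence also $-t$), $u_t$ is automatically a biholomorphism of $D$; no limits of maps, Cartan uniqueness, or Theorem \ref{ZimThm2.6} are needed.
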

\begin{proof}
It suffices to fix $w_0\in D$ and $t\in \R$, and show that $w_0+ite_1\in D$. Since $F_j$ converges local uniformly to $F$, there exits $\varepsilon>0$ and $J\in\N$ such that 
$$B_{\varepsilon}(w_0)\subset F_j(\Omega)=\Lambda_j(\widetilde{\Omega})$$
for all $j\ge J$.

Define the sequence $\{w_j=\Lambda^{-1}_j(w_0)\}$. Then 
\begin{equation}
\label{MRP2eqn}
\{w_j+xe_1\mid -r_j\varepsilon<x<r_j\varepsilon\}\subset\Lambda_j^{-1}(B_{\varepsilon}(w_0))\subset\widetilde{\Omega} 
\end{equation}
whenever $j\ge J$. Here, the sequence $r_j$ is as in the proof of Proposition \ref{MRProof1}.

Fix $m\in\N$ such that $|t|<m\varepsilon$. Let $\delta_m$ be the associated constant from Lemma \ref{ZimLem4.8}. Since $r_j\to0$, we have that $w_j\to0$. So, by increasing $J$ if necessary, we may assume that 
$$w_j\in B_{\delta_m}(0)$$
whenever $j\ge J$. Then by (\ref{MRP2eqn}) and Lemma \ref{ZimLem4.8}, 
$$\left\{w_j+(x+iy)e_1 \ \Bigg| \ -\frac{r_j\varepsilon}{2}\le x\le\frac{r_j\varepsilon}{2}, -mr_j\varepsilon\le y\le mr_j\varepsilon\right\}\subset\widetilde{\Omega}$$
for all $j\ge J$. Thus
$$W:=\left\{w_0+(x+iy)e_1 \ \Bigg| \ -\frac{\varepsilon}{2}\le x\le\frac{\varepsilon}{2}, -m\varepsilon\le y\le m\varepsilon\right\}\subset\Lambda_j(\widetilde{\Omega}).$$
Then for all $j\ge J$,
$$d^K_{\Lambda_j(\widetilde{\Omega})}(w_0,w_0+ite_1)\le d^K_{W}(w_0,w_0+ite_1).$$
Thus $$\sup_{j\ge J}d^K_{\Lambda_j(\widetilde{\Omega})}(w_0,w_0+ite_1)<\infty.$$
By Lemma \ref{ZimLem4.7}, $w_0+ite_1\in D$.
\end{proof}

\subsection{Biholomorphic to the Ball}

Since we have verified all of the necessary calculations, we now appeal directly to Zimmer's results to complete the proof.

\begin{proposition}[Proposition 6.1 in \cite{Zimmer}]
\label{ZimProp6.1}
Suppose $\Omega\subset\C^n$ is a bounded domain which covers a compact manifold. If $\partial\Omega$ is $C^{1,1}$, then $\Omega$ is a bounded symmetric domain.
\end{proposition}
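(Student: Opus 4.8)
The plan is to harvest Propositions~\ref{MRProof1} and~\ref{MRProof2} and feed the output into the Frankel--Nadel dichotomy (Theorem~\ref{FNSymmetric}). By Proposition~\ref{MRProof1} we may replace $\Omega$ by the biholomorphic domain $D\subset\C^n$ with $\mathcal{P}_\alpha\subset D\subset\mathcal{P}_\beta$, and by Proposition~\ref{MRProof2} the group $\Aut(D)\cong\Aut(\Omega)$ is nondiscrete. Writing $X=\Omega/\Gamma$ for the compact quotient, we may take $\Omega$ to be the universal cover $\widetilde{X}$ (passing to the universal cover if necessary). The first thing I would record is that $c_1(X)<0$: since $\Omega$ is a bounded domain, its Bergman kernel $\kappa_\Omega(z,z)$ induces an $\Aut(\Omega)$-invariant---hence $\Gamma$-invariant---Hermitian metric on $K_\Omega$ whose curvature is the Bergman metric $i\partial\overline{\partial}\log\kappa_\Omega(z,z)>0$; this descends to a positively curved metric on $K_X$, so $K_X$ is ample and $c_1(X)=-c_1(K_X)<0$. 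Theorem~\ref{FNSymmetric} then applies and shows $\Omega$ is biholomorphic either to (a) a bounded symmetric domain, in which case we are done, or to (b) a nontrivial product $D_1\times D_2$ with $D_1$ a bounded symmetric domain and $\Aut(D_2)$ discrete.

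The entire content is the exclusion of case (b), and here I would exploit the squeezing $\mathcal{P}_\alpha\subset D\subset\mathcal{P}_\beta$ through the Bergman kernel. Combining the monotonicity property (Proposition~\ref{ZimProp2.3}) with the explicit formula of Proposition~\ref{ZimObs2.5}, for real $\lambda>0$ one obtains
\[
C_\beta\,\re(\lambda)^{-(n+1)}\ \le\ \kappa_D\big((\lambda,0,\dots,0),(\lambda,0,\dots,0)\big)\ \le\ C_\alpha\,\re(\lambda)^{-(n+1)},
\]
so the Bergman kernel of $D$ blows up along the ray $\lambda e_1$ at exactly the rate of the $n$-ball as $\lambda\downarrow 0$, and the ray meets $\partial D$ transversally, $\mathrm{dist}(\lambda e_1,\partial D)\asymp\re(\lambda)$, since $\partial\mathcal{P}_\alpha$ and $\partial\mathcal{P}_\beta$ are smooth at $0$. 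Now let $\Phi\colon D\to D_1\times D_2$ be a biholomorphism. As $\lambda\downarrow 0$ the image $\Phi(\lambda e_1)$ cannot remain in a compact subset of $D_1\times D_2$ (otherwise $\Phi^{-1}$ would carry a limit point of it back to an interior point of $D$, contradicting $\lambda e_1\to 0\in\partial D$), so along a subsequence $\Phi(\lambda e_1)\to(\xi_1,\xi_2)\in\partial(D_1\times D_2)$ with $\xi_i\in\partial D_i$ for at least one index. Using the product formula $\kappa_{D_1\times D_2}\big((w_1,w_2),(w_1,w_2)\big)=\kappa_{D_1}(w_1,w_1)\kappa_{D_2}(w_2,w_2)$, the transformation law $\kappa_D(z,z)=|\det J_\Phi(z)|^2\kappa_{D_1\times D_2}(\Phi(z),\Phi(z))$, the crude bounds $c_i'\le\kappa_{D_i}(w,w)\le C_i\,\mathrm{dist}(w,\partial D_i)^{-2n_i}$ valid on any bounded domain, and the sharp boundary asymptotics of the Bergman kernel of the symmetric factor $D_1$, one checks that a factor of positive dimension is incompatible with the clean two-sided bound above: the extra factor either stays away from its boundary, contributing only a bounded term and forcing the effective exponent below $n+1$, or it too is driven toward its boundary, and the transversality of the approach excludes the tangential rates that could compensate. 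This contradiction leaves only case (a).

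The main obstacle is precisely this last comparison: one must locate where the distinguished ray $\lambda e_1$ goes inside $D_1\times D_2$ under an \emph{a priori} arbitrary biholomorphism and then match Bergman-kernel blow-up exponents, which demands the genuinely sharp boundary behaviour of the Bergman kernel of a bounded symmetric domain (and a workable estimate for the possibly badly-behaved factor $D_2$), well beyond the crude bounds above. This is exactly the computation carried out in \cite{Zimmer}; as elsewhere in the paper I would at this point cite \cite{Zimmer} for the remaining technical details, the only genuinely new inputs being Propositions~\ref{MRProof1} and~\ref{MRProof2}, which produce the squeezed model $D$ and the nondiscreteness of $\Aut(\Omega)$ starting from a hyperbolic manifold rather than from a domain already lying in $\C^n$.
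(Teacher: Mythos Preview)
Your plan to invoke the Frankel--Nadel dichotomy (Theorem~\ref{FNSymmetric}) and then eliminate case~(b) matches the paper's outline, but your proposed elimination of the product case is \emph{not} the argument Zimmer uses, and your appeal to \cite{Zimmer} at the end misidentifies which computation lives where. According to the paper's description, Zimmer's exclusion of case~(b) is group-theoretic, not analytic: the one-parameter subgroup $\{u_t\}\subset\Aut(D)$ furnished by Proposition~\ref{MRProof2} is used to build what appears to be an infinite discrete subgroup of $\Aut(D)$, and a compactness argument then forces this group to be finite, yielding the contradiction with the discreteness of $\Aut(D_2)$. The Bergman-kernel monotonicity (Proposition~\ref{ZimProp2.3}) and the explicit formula of Proposition~\ref{ZimObs2.5} are \emph{not} invoked here; they enter only later, in the proof of Proposition~\ref{MRProof4}, where one shows that a bounded symmetric domain with $C^{1,1}$ boundary has rank one. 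So the ``computation carried out in \cite{Zimmer}'' that you cite belongs to a different step.

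As for the Bergman-kernel route you sketch for case~(b), the gap you yourself flag is genuine and does not obviously close. In the product $D_1\times D_2$ the factor $D_2$ is only known to be a bounded domain with discrete automorphism group; there is no sharp boundary asymptotic for $\kappa_{D_2}$ to play against the known behaviour of $\kappa_{D_1}$, and the unknown Jacobian factor $|\det J_\Phi(\lambda e_1)|^2$ in the transformation law can in principle absorb any exponent mismatch along the image of the ray. Your dichotomy ``either the second coordinate stays away from $\partial D_2$ or it is driven to $\partial D_2$'' does not by itself control rates well enough to contradict the two-sided bound $C_\beta\lambda^{-(n+1)}\le\kappa_D\le C_\alpha\lambda^{-(n+1)}$. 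By contrast, the paper's one-parameter-subgroup argument needs nothing about $D_2$ beyond the discreteness of $\Aut(D_2)$, which is exactly what Frankel--Nadel hands you.
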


The proof of Proposition \ref{ZimProp6.1} exploits the geometry of the rescaled domain $D$, which we obtained in the proof of Proposition \ref{MRProof1}. By way of Theorem \ref{FNSymmetric}, a contradiction argument is used to show that the latter possibility of that result is impossible. The fact that $\Aut(D)$ contains a one-parameter subgroup allows for the construction of a seemingly infinite discrete subgroup of $\Aut(D)$. Zimmer deduces that this discrete group is compact, and hence finite giving the desired contradiction.

As the proof of Proposition \ref{ZimProp6.1} requires only that $\Omega$ be biholomorphic to the bounded domain $D\subset\C^n$, the next result follows immediately from Propositions \ref{MRProof1}, \ref{MRProof2}, and \ref{ZimProp6.1}.

\begin{proposition}
\label{MRProof3}
Let $\Omega$ be a subdomain of a taut manifold. Suppose $\partial\Omega$ is $C^{1,1}$, and that there exists a totally real boundary point $p\in\partial\Omega$. If $\Omega$ covers a compact manifold, then $\Omega$ is biholomorphic to a bounded symmetric domain in $\C^n$.
\end{proposition}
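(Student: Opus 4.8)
The proof will simply combine Propositions \ref{MRProof1} and \ref{MRProof2} with Zimmer's Proposition \ref{ZimProp6.1}, so I expect it to take only a few lines. First I would invoke Proposition \ref{MRProof1}: the hypotheses on $\Omega$ — a subdomain of a hyperbolic complex manifold with $C^{1,1}$ boundary, a totally real boundary point, and a compact quotient — are precisely those of that proposition, and they produce a biholomorphism $F\colon\Omega\to D$ onto a domain $D\subset\C^n$ with $\mathcal{P}_{\alpha}\subset D\subset\mathcal{P}_{\beta}$ for some $0<\beta<\alpha$. Since $\mathcal{P}_{\beta}$ is biholomorphic to the Euclidean ball, it is biholomorphic to a bounded domain; restricting such a biholomorphism to the subdomain $D\subset\mathcal{P}_{\beta}$ exhibits $D$, and hence $\Omega$, as biholomorphic to a bounded domain in $\C^n$. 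The map $F$ also carries the free, properly discontinuous, cocompact $\Gamma$-action over, so $D$ covers a compact manifold as well.

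Next I would apply Proposition \ref{MRProof2} to this same $D$: its automorphism group contains the one-parameter subgroup $u_t(z_1,\dots,z_n)=(z_1+it,z_2,\dots,z_n)$, so $\Aut(D)\cong\Aut(\Omega)$ is nondiscrete. From here the remaining content is exactly the argument Zimmer uses for Proposition \ref{ZimProp6.1}. With $X=D/\Gamma$ a compact complex manifold satisfying $c_1(X)<0$ (a compact quotient of a bounded domain has negative first Chern class) and with nondiscrete automorphism group upstairs, Theorem \ref{FNSymmetric} forces the universal cover to be either a bounded symmetric domain or a nontrivial product $D_1\times D_2$ with $\Aut(D_2)$ discrete. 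The subgroup $u_t$, played against the $\Gamma$-action, yields an infinite discrete subgroup of $\Aut(D)$; by properness of $(D,d_D^K)$ (Proposition \ref{ZimProp3.1}) this subgroup is relatively compact, hence finite, which is a contradiction and rules out the product case. Therefore $D$, and so $\Omega$, is biholomorphic to a bounded symmetric domain. What turns this into a citation rather than a reproof is the fact, noted above, that Zimmer's proof of Proposition \ref{ZimProp6.1} uses the $C^{1,1}$ hypothesis solely to manufacture the sandwiched model $D$ and the one-parameter subgroup — both of which we have already obtained, in the more general setting, via Propositions \ref{MRProof1} and \ref{MRProof2}.

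The only point demanding attention is bookkeeping rather than mathematics: I would check that each hypothesis actually used in Zimmer's proof of Proposition \ref{ZimProp6.1} is genuinely in hand here — $D$ bounded (verified above through $D\subset\mathcal{P}_{\beta}$), $D$ covering a compact manifold (transported from $\Omega$ by $F$), $\Aut(D)$ nondiscrete with the explicit subgroup $u_t$, and $c_1(X)<0$ for $X=D/\Gamma$ so that Theorem \ref{FNSymmetric} applies. No new estimates are required, and I would present the final argument as a short deduction from Propositions \ref{MRProof1}, \ref{MRProof2}, and \ref{ZimProp6.1}.
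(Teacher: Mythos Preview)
Your proposal is correct and matches the paper's approach exactly: the paper states that Proposition \ref{MRProof3} ``follows immediately from Propositions \ref{MRProof1}, \ref{MRProof2}, and \ref{ZimProp6.1},'' the point being that Zimmer's proof of Proposition \ref{ZimProp6.1} uses the $C^{1,1}$ hypothesis only to produce the sandwiched domain $D$ and the one-parameter subgroup, both of which you have already secured. Your write-up supplies more of the bookkeeping (boundedness of $D$, transport of the $\Gamma$-action, $c_1<0$) than the paper does, but the logical route is identical.
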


For the final part of the proof of Theorem \ref{dissthm}, we again cite Zimmer.

\begin{proposition}[Proposition 8.1 in \cite{Zimmer}]
\label{MRProof4}
Suppose $\Omega\subset\C^n$ is a bounded symmetric domain. If $\partial\Omega$ is $C^{1,1}$, then $\Omega$ is biholomorphic to the unit ball.
\end{proposition}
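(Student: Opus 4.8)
The plan is to pit two properties of $\Omega$ against each other through the Bergman kernel: the $C^{1,1}$ hypothesis limits how fast $\kappa_\Omega$ can blow up at $\partial\Omega$, whereas a bounded symmetric domain other than the ball blows up strictly faster at a Bergman--Shilov boundary point; the only way to avoid a contradiction is that $\Omega$ has real rank $1$, and by Cartan \cite{Cartan} the rank-$1$ bounded symmetric domain is the ball. For the upper bound, note that since $\partial\Omega$ is compact and $C^{1,1}$, $\Omega$ satisfies a uniform interior ball condition --- the estimate already used in the proof of Lemma \ref{ZimLem4.5}: there is $r>0$ so that at each $p\in\partial\Omega$ there is an interior ball $B_r(p+r\,n_\Omega(p))\subset\Omega$ tangent to $\partial\Omega$ at $p$. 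For $z\in\Omega$ with $\delta(z):=\operatorname{dist}(z,\partial\Omega)<r$ and $p$ the nearest boundary point, $z$ lies on the inward normal inside that ball, at distance $\delta(z)$ from the ball's sphere, so by the monotonicity property (Proposition \ref{ZimProp2.3}) and the explicit Bergman kernel of a ball --- the model computation recorded in Siegel form in Proposition \ref{ZimObs2.5} ---
\[
\kappa_\Omega(z,z)\ \le\ \kappa_{B_r(p+r\,n_\Omega(p))}(z,z)\ \le\ C\,\delta(z)^{-(n+1)}
\]
with $C$ independent of $p$.

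For the lower bound, suppose $\Omega$ is not the ball, so by Cartan it has real rank $\ge2$; realize it biholomorphically by its Harish--Chandra form $\prod_i(\Omega_i)_{HC}$ with $\Omega_i$ irreducible of rank $r_i$ and genus $p_i$. By the classical formula for the Bergman kernel of a bounded symmetric domain (Hua), $\kappa$ is a negative power of the generic-norm polynomial; moving along the segment $z=(1-t)z_0$ toward a Bergman--Shilov point $z_0$, at which all spectral values are $1$, one gets $\kappa(z,z)\asymp\delta(z)^{-q}$ with $q=\sum_i p_i r_i$. The structure constants of an irreducible bounded symmetric domain, $n_i=r_i+\tfrac{a_i}{2}r_i(r_i-1)+b_i r_i$ and $p_i=a_i(r_i-1)+b_i+2$, satisfy
\[
p_i r_i-(n_i+1)=(r_i-1)\!\left(\tfrac{a_i r_i}{2}+1\right)\ \ge\ 0,
\]
with equality exactly when $r_i=1$; hence $q=\sum_i p_i r_i>n+1$ unless $\Omega$ is a single ball. (The reducible case is even more immediate: near a point of $\partial\Omega_1\times\partial\Omega_2$, the boundary of a nontrivial product is a union of two transverse hypersurfaces and so is not even $C^1$.) Letting $\delta\to0$, the bound $\kappa\asymp\delta^{-q}$ with $q>n+1$ contradicts $\kappa\le C\,\delta^{-(n+1)}$; therefore $\Omega$ is the ball.

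The main obstacle is reconciling the two bounds honestly. The upper bound refers to the given realization of $\Omega$ --- the one whose boundary is $C^{1,1}$ --- while the Hua lower bound is computed on the Harish--Chandra realization, and $C^{1,1}$-regularity of the boundary is not a biholomorphic invariant. One must therefore either transport the fast blow-up back through the Harish--Chandra map (using that this map is proper, together with the estimate $\kappa_\Omega(z,z)\le C\,\delta_\Omega(z)^{-(n+1)}$, which holds at \emph{every} interior point), or run the comparison intrinsically --- for instance through the holomorphic sectional curvature of the Bergman metric, which is a biholomorphic invariant, is driven toward the ball's constant value near a boundary point carrying a two-sided ball estimate, and is genuinely non-constant on any bounded symmetric domain of rank $\ge2$. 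Once that bookkeeping is in place, the interior-ball estimate, Hua's formula, and the structure-constant identity are all routine.
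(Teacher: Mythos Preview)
Your strategy---compare the Bergman-kernel blow-up forced by the interior ball condition against the faster blow-up predicted by Hua's formula on the Harish--Chandra model---is different from the route the paper cites from Zimmer. There, one works on the rescaled domain $D$ with $\mathcal{P}_\alpha\subset D\subset\mathcal{P}_\beta$, studies the holomorphic function $\det F'$ along the linear slice $\{(\lambda,0,\dots,0)\}\subset D$, and uses the sandwich together with Proposition~\ref{ZimObs2.5} to control that Jacobian; the contradiction with rank $\ge 2$ then comes from showing that the diagonal of a maximal polydisk in $\Omega_{HC}$ is parameterized via this Jacobian. In other words, Zimmer's argument confronts head-on the very transformation law $\kappa_{\Omega_{HC}}(w,w)=|\det F'(w)|^{2}\,\kappa_D(F(w),F(w))$ that you are trying to sidestep.

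The obstacle you flag is therefore not bookkeeping; it is the whole difficulty. Your upper bound $\kappa_\Omega\le C\,\delta_\Omega^{-(n+1)}$ lives on the $C^{1,1}$ realization, your lower bound $\kappa_{\Omega_{HC}}\asymp\delta_{HC}^{-q}$ lives on Harish--Chandra, and passing between them introduces exactly the unknown factor $|\det F'|^{2}$ together with an unknown comparison between $\delta_\Omega\circ F$ and $\delta_{HC}$. Neither of your proposed fixes closes this: properness of $F$ says only that $\delta_\Omega(F(w))\to 0$ as $\delta_{HC}(w)\to 0$, with no rate; and the curvature idea would require a Klembeck-type theorem (holomorphic sectional curvature of the Bergman metric tends to the ball's constant near $\partial\Omega$) under a mere $C^{1,1}$ hypothesis and no pseudoconvexity assumption, which is not available. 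Your structure-constant identity $p_ir_i-(n_i+1)=(r_i-1)\bigl(\tfrac{a_ir_i}{2}+1\bigr)$ is correct and is morally the same inequality that drives Zimmer's contradiction, but to make it bite you must control the Jacobian along the slice---which is precisely the content of the argument the paper describes.
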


The proof of Proposition \ref{MRProof4} again exploits the geometry of the rescaled domain $D$, and makes use of the theory of bounded symmetric domains. Zimmer introduces a holomorphic function which measures the volume contraction (or expansion) of the biholomorphism from Proposition \ref{MRProof1} along a linear slice of the rescaled domain, $D$. A contradiction argument on the real rank of $\Omega$ as a bounded symmetric domain completes the proof. A key part of the argument is showing that one can parameterize the diagonal of a maximal polydisk in the Harish-Chandra embedding of $\Omega$ via the Jacobian matrix of the biholomorphism. Propositions \ref{ZimProp2.3} and \ref{ZimObs2.5} are instrumental here.

Theorem \ref{dissthm} now follows immediately from Propositions \ref{MRProof1}, \ref{MRProof2}, \ref{MRProof3}, and \ref{MRProof4}.

%%%%%%%%%%%%%%%%%%%%%%%%%%%%%%%%%%%%%%%%%%%%%%%%%%%%%%%%%%%%%%%%%%%%%%%%%%%%%%%%
\section{Conclusions}
\label{ConclusionSec}

The initial inspiration for Theorem \ref{dissthm} was a result by Wong \cite{Wong81}:

\begin{theorem}[Theorem 1.5 in \cite{Wong81}]
\label{WongUni}
Let $M$ be a compact K\"ahler surface which is hyperbolic in the sense of Kobayashi. Suppose that 
\begin{enumerate}[label=\textup{(\arabic*)}]
    \item $M=\widetilde{M}/\Gamma$, where $\widetilde{M}$ is the universal cover of $M$ and $\Gamma$ is a discrete subgroup of the identity component of $\Aut\left(\widetilde{M}\right)$ acting freely on $\widetilde{M}$.
    \item $\Gamma$ is not isomorphic to the fundamental group of a compact real surface.
\end{enumerate}
Then $\widetilde{M}$ is biholomorphic to either the unit ball in $\C^2$ or the bidisk.
\end{theorem}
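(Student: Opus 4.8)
To prove Theorem~\ref{WongUni}, the plan is to recognize it as the two-dimensional case of the Frankel--Nadel theorem (Theorem~\ref{FNSymmetric}): one checks the two hypotheses of that theorem for $X=M$---namely that $c_1(M)<0$ and that $\Aut(\widetilde{M})$ is nondiscrete---and then reads off the conclusion from the (very short) list of bounded symmetric domains of complex dimension $2$. Note at the outset that $\widetilde{M}$, being a covering space of the hyperbolic manifold $M$, is itself hyperbolic, so that $\Aut(\widetilde{M})$ is a Lie group (see \cite{KobayashiHyper}).

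The first step is to verify the two hypotheses of Theorem~\ref{FNSymmetric}. For nondiscreteness: by hypothesis (1), $\Gamma$ is contained in the identity component $G$ of $\Aut(\widetilde{M})$, a connected Lie group. If $\Gamma$ were trivial, then $M=\widetilde{M}$ would be a compact simply connected manifold, so $\pi_1(M)\cong\pi_1(S^2)$ would contradict hypothesis (2); hence $\Gamma\ne\{e\}$, so $G$ is positive-dimensional and $\Aut(\widetilde{M})$ is nondiscrete. A refinement of the same argument shows $\Gamma$ is in fact \emph{infinite}: if $\Gamma$ were finite and nontrivial then $\widetilde{M}\to M$ would be a finite covering, so $\widetilde{M}$ would be a compact hyperbolic complex manifold; but such a manifold has finite automorphism group---a nonzero holomorphic vector field would integrate, by compactness, to a nonconstant holomorphic map $\C\to\widetilde{M}$, which hyperbolicity forbids---forcing $G=\{e\}$ and contradicting $\Gamma\ne\{e\}$. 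Thus $\pi_1(M)\cong\Gamma$ is infinite. For $c_1(M)<0$: a Kobayashi hyperbolic manifold admits no nonconstant holomorphic image of $\C$ or of $\mathbb{P}^1$, so $M$ contains no rational and no elliptic curve. By the Enriques--Kodaira classification, every compact Kähler surface of Kodaira dimension at most $1$ other than a K3 surface contains a rational or an elliptic curve, and K3 surfaces are simply connected; since $\pi_1(M)$ is infinite, $M$ is therefore of general type. Being hyperbolic, $M$ carries no $(-1)$-curve and so is minimal, and it carries no $(-2)$-curve either, so $K_M$ is ample, i.e.\ $c_1(M)<0$.

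Now Theorem~\ref{FNSymmetric} applies with $X=M$, and $\widetilde{M}$ is biholomorphic either to a bounded symmetric domain or to a nontrivial product $D_1\times D_2$ with $D_1$ a bounded symmetric domain and $\Aut(D_2)$ discrete. Since $\dim_{\C}\widetilde{M}=2$, the product case would force $\dim D_1=\dim D_2=1$, so $D_1\cong\D$ while $D_2$ would be a simply connected Riemann surface (each factor being simply connected) with discrete automorphism group; but the simply connected Riemann surfaces $\mathbb{P}^1$, $\C$ and $\D$ all have nondiscrete automorphism group, so no such $D_2$ exists. Hence $\widetilde{M}$ is a bounded symmetric domain of complex dimension $2$, and by E.~Cartan's classification \cite{Cartan} the only such domains are the ball $\B^2$ (the unique irreducible one, of real rank $1$) and the bidisk $\D\times\D$ (of real rank $2$). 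Therefore $\widetilde{M}$ is biholomorphic to the unit ball in $\C^2$ or to the bidisk, which is Theorem~\ref{WongUni}.

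The main obstacle is the step producing $c_1(M)<0$: it is here that one invokes the Enriques--Kodaira classification and the incompatibility of Kobayashi hyperbolicity with rational and elliptic curves, and it is precisely the infinitude of $\pi_1(M)$---itself a consequence of the finiteness of the automorphism group of a compact hyperbolic manifold---that is needed to exclude the Kodaira dimension $0$ (K3) case. The remaining ingredients, namely the nondiscreteness of $\Aut(\widetilde{M})$, the citation of Theorem~\ref{FNSymmetric}, and the case analysis in complex dimension $2$, are comparatively routine.
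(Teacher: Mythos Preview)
The paper does not prove Theorem~\ref{WongUni}; it is quoted in Section~\ref{ConclusionSec} purely as historical motivation, with the proof left to the cited reference \cite{Wong81}. There is therefore nothing in the present paper to compare your proposal against. Note too that your argument cannot be Wong's: you rest everything on the Frankel--Nadel theorem (Theorem~\ref{FNSymmetric}), which dates from 1990/1995 and so postdates \cite{Wong81} by roughly a decade. What you have written is an \emph{a posteriori} derivation of Wong's statement from later, stronger machinery---and indeed your argument uses hypothesis~(2) only in the trivial form ``$\Gamma\neq\{e\}$,'' which already signals that Theorem~\ref{FNSymmetric} is doing far more work than Wong's original hypotheses required.

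As an independent argument your proposal is essentially sound, but one step is stated incorrectly. In deducing $c_1(M)<0$ you assert that ``every compact K\"ahler surface of Kodaira dimension at most $1$ other than a K3 surface contains a rational or an elliptic curve.'' This is false: a simple abelian surface has Kodaira dimension $0$ and contains no curves at all. The conclusion survives, since abelian surfaces (and their \'etale quotients, the bielliptic surfaces) are covered by $\C^2$ and hence are never Kobayashi hyperbolic, so they are excluded on those grounds rather than by the presence of a curve; but the sentence as written needs to be repaired to treat the abelian and bielliptic cases via hyperbolicity directly. With that patch, the remaining steps---nondiscreteness from $\Gamma\subset\Aut(\widetilde{M})^0$, infinitude of $\Gamma$ from the finiteness of the automorphism group of a compact hyperbolic manifold, elimination of case~(b) of Theorem~\ref{FNSymmetric} via uniformization of Riemann surfaces, and Cartan's list in complex dimension~$2$---are correct.
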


Proposition \ref{CFKWProp3.1} and, of course, Zimmer's paper \cite{Zimmer} were the inspiration for our method of proof.

A potential consequence of Theorem \ref{dissthm} is an extension of the following result of Cheung et al. \cite{CFKW}.

\begin{theorem}[Main Theorem in \cite{CFKW}]
\label{CFKWmain}
Let $M$ be a hyperbolic complex surface. Let $\Omega\subset\subset M$ be a subdomain with smooth boundary ($C^2$ is enough). If $\Omega$ admits a compact quotient, then either
\begin{enumerate}
    \item $\Omega$ is biholomorphic to a ball, or else
    \item The universal covering of $\Omega$ is biholomorphic to a bidisk.
\end{enumerate}
\end{theorem}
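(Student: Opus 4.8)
The plan is to run a dichotomy on the Levi geometry of $\partial\Omega$ and then treat the two resulting cases quite differently, one by a direct citation and one by a rescaling argument modelled on Section \ref{ProofSec}. Since $\dim_{\C}M=2$, the boundary $\partial\Omega$ is a real hypersurface, so any positive-dimensional complex analytic subvariety contained in $\partial\Omega$ is a complex curve. Hence exactly one of two things happens: either some boundary point $p$ is totally real in the sense of Section \ref{LWSec}, i.e.\ no germ of a complex curve through $p$ lies in $\partial\Omega$; or through every boundary point there passes a germ of a complex curve contained in $\partial\Omega$, in which case the standard fact that a $C^{2}$ real hypersurface with a complex curve through each of its points has identically vanishing Levi form shows that $\partial\Omega$ is Levi-flat.

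In the totally real case I would simply invoke Theorem \ref{dissthm}: $\Omega$ is a subdomain of the hyperbolic complex manifold $M$ whose boundary is $C^{2}$, hence $C^{1,1}$, which contains a totally real boundary point and admits a compact quotient; since the localization of Section \ref{LWSec} uses only cocompactness of $\Aut(\Omega)$ (Lemma \ref{LWLem2.1}), and one may, after passing to a torsion-free finite-index subgroup, assume $\Omega$ genuinely covers a compact manifold, Theorem \ref{dissthm} gives that $\Omega$ is biholomorphic to the unit ball --- alternative (1). (One can also argue classically that a $C^{2}$ boundary in $\C^{2}$ is strictly pseudoconvex at points near $p$ and invoke Proposition \ref{CFKWProp3.1}; but Theorem \ref{dissthm} is the most economical route.)

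The substance lies in the Levi-flat case, where I would adapt the rescaling of Propositions \ref{MRProof1} and \ref{MRProof2}. Fix $p\in\partial\Omega$ and a boundary neighborhood $\widehat\Omega=\Omega\cap U$; by the remark after Definition \ref{boundaryneighborhood} it is biholomorphic to a bounded domain $\widetilde\Omega\subset\C^{2}$, and by the local structure of a $C^{2}$ Levi-flat hypersurface one may choose holomorphic coordinates $(w,\zeta)$ near the marked point in which $\partial\widetilde\Omega=\{\re w=\psi(\im w,\zeta)\}$ with $\psi$ of order two, so $\widetilde\Omega$ looks near $p$ like a half-plane in $w$ times a disk in $\zeta$, the leaves of the Levi foliation being the slices $\{w=\text{const}\}$. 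Using Lemma \ref{LWLem2.5} and the fundamental set to push a base point toward $p$, and dilating only in the $w$-direction (crucially \emph{not} in the leaf direction, so the rescaled domains stay Kobayashi hyperbolic rather than limiting to $\D\times\C$), one realizes $\Omega$ as biholomorphic to a domain $D\subset\C^{2}$ for which $\Aut(D)$ contains the one-parameter subgroup $w\mapsto w+it$; the verification that this subgroup really preserves $D$ is Proposition \ref{MRProof2} again, with $\psi$ of order two flattening under the dilation. Thus $\Aut(\Omega)\cong\Aut(D)$ is nondiscrete, and lifting this subgroup through the covering $\widetilde\Omega\to\Omega$ (any automorphism homotopic to the identity lifts) shows $\Aut(\widetilde\Omega)$ is nondiscrete. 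Now apply Theorem \ref{FNSymmetric} to $X=\Omega/\Gamma$: this is a compact Kobayashi hyperbolic complex surface, hence, as is classical, a minimal surface of general type with no smooth rational curves, so $K_{X}$ is ample and $c_{1}(X)<0$; therefore $\widetilde\Omega$ is either a bounded symmetric domain or a nontrivial product $D_{1}\times D_{2}$ with $\Aut(D_{2})$ discrete. The latter is impossible in dimension $2$, since $\widetilde\Omega$ is simply connected, so $D_{2}$ would be a simply connected hyperbolic Riemann surface, hence the disk, whose automorphism group is not discrete. So $\widetilde\Omega$ is a bounded symmetric domain, and by E.~Cartan's classification the only ones in $\C^{2}$ are the ball and the bidisk. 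Finally one excludes the ball on this branch --- using that the slices $\{w=\text{const}\}$ produce in $\partial D$ a complex curve that the ball cannot have, equivalently that the Kobayashi metric of a ball quotient cannot exhibit the one-sided product behavior forced by a $C^{2}$ Levi-flat boundary --- so $\widetilde\Omega$ is the bidisk, which is alternative (2).

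I expect the Levi-flat case to be the main obstacle, and within it two steps: (i) choosing the rescaling so its limit remains Kobayashi hyperbolic while acquiring a one-parameter automorphism group, and (ii) excluding the ball in favor of the bidisk via the boundary complex curve (or the Kobayashi-metric asymptotics); the remainder is either a citation (Theorem \ref{dissthm}, Theorem \ref{FNSymmetric}, Cartan's list) or a normal-families argument of the kind already carried out in Section \ref{ProofSec}. Conceptually the whole proof rests on the rigidity of the two rank-$\le 2$ homogeneous models against the two boundary types: a totally real boundary point forces the ball, a Levi-flat boundary forces the bidisk, and the $C^{2}$ hypothesis is precisely what guarantees that, near a suitable point, the boundary is one of these two types.
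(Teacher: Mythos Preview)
Your dichotomy and the treatment of the first branch match the paper's description of the proof in \cite{CFKW}: one splits on whether $\partial\Omega$ contains a strictly pseudoconvex (equivalently, in this setting, totally real) point, and if so one invokes Proposition~\ref{CFKWProp3.1} to get the ball. For the second branch, however, your route diverges substantially from the original. The paper reports that \cite{CFKW} handles the Levi-flat case not by rescaling or by Theorem~\ref{FNSymmetric}, but by appealing to the structure theory of Levi-flat hypersurfaces developed in \cite{FW1} and \cite{FW2} together with estimates on invariant volume forms on the bidisk; this is a direct comparison argument rather than a detour through nondiscreteness of the automorphism group and Cartan's list. Your approach, if it worked, would have the advantage of being more structural and of recycling the machinery of Section~\ref{ProofSec}; the original approach has the advantage of being tailored to dimension two and of avoiding the heavy input of Frankel--Nadel.

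That said, your Levi-flat rescaling has a genuine gap. Levi-flatness of a $C^{2}$ hypersurface in $\C^{2}$ only forces the $|\zeta|^{2}$ coefficient of the defining function to vanish; it does not kill the $\re(\zeta^{2})$ term, the mixed $(\im w)\zeta$ terms, or the $(\im w)^{2}$ term. Under the anisotropic dilation $w\mapsto w/r_{j}$ with $\zeta$ fixed, the $(\im w)^{2}$ term scales away, but the pure $\zeta$ terms are multiplied by $1/r_{j}$ and blow up, while mixed terms persist. So the rescaled boundaries need not converge to $\{\re w=0\}\times\D$, and you lose the sandwich $\mathcal{P}_{\alpha}\subset D\subset\mathcal{P}_{\beta}$ that drives both the injectivity argument (Theorem~\ref{ZimThm2.6}) and Proposition~\ref{MRProof2}. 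One would first need a normal form for $C^{2}$ Levi-flat hypersurfaces that straightens the foliation enough to make the limit tractable, and this is essentially what the Fu--Wong input supplies in \cite{CFKW}. Two smaller points: your appeal to Theorem~\ref{FNSymmetric} needs $c_{1}(X)<0$, and ``Kobayashi hyperbolic compact surface $\Rightarrow$ $K_{X}$ ample'' is true but is itself a nontrivial classification statement you should cite rather than assert; and your final exclusion of the ball on the Levi-flat branch conflates $\partial D$ with $\partial\Omega$ --- the rescaled domain $D$ need not have any boundary regularity, so the complex curve in $\partial\Omega$ does not obviously transport to one in $\partial D$.
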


The proof of Theorem \ref{CFKWmain} is split into two parts. The first part is concerned with when $\partial\Omega$ contains a strictly pseudoconvex (totally real) point. In this case, we have Proposition \ref{CFKWProp3.1}, a stronger result which is true in any dimension. Our contribution was to replace the smooth boundary condition with the slightly weaker $C^{1,1}$ boundary condition.

The second part of the proof deals with the case when the boundary does not contain a strictly pseudoconvex point. In this case, the dimension $2$ condition is necessary, and the argument is slightly more involved, requiring work from \cite{FW1} and \cite{FW2}. This is the case that yields the bidisk. The argument uses estimates on invariant volume forms on the bidisk.

We replace the smooth boundary condition in Theorem \ref{CFKWmain} with the $C^{1,1}$ boundary condition to obtain the following result:

\begin{conjecture}
Let $M$ be a hyperbolic complex surface, and let $\Omega\subset\subset M$ be a subdomain with $C^{1,1}$ boundary. If $\Omega$ admits a compact quotient, and $\partial\Omega$ does not contain a totally real boundary point, then the universal cover of $\Omega$ is biholomorphic to a bidisk.
\end{conjecture}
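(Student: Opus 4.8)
The plan is to mirror the two-part structure of the proof of Theorem~\ref{CFKWmain}. Since strictly pseudoconvex boundary points are totally real, the hypothesis that $\partial\Omega$ contains no totally real point is stronger than the hypothesis governing the ``second case'' of that argument, and the task is to carry the $C^2$ reasoning of \cite{FW1,FW2,CFKW} over to the $C^{1,1}$ setting, borrowing the localization device of Section~\ref{LWSec} and, where possible, the rescaling and symmetric-domain machinery of Section~\ref{ProofSec} in place of pointwise boundary computations. I would organize the argument in three stages: (i) extract a Levi-flat structure on $\partial\Omega$; (ii) use cocompactness to promote this to a $\Gamma$-invariant holomorphic foliation near the boundary and thereby a nontrivial connected group acting on $\Omega$; (iii) invoke the Frankel--Nadel dichotomy (Theorem~\ref{FNSymmetric}) and the classification of bounded symmetric domains in dimension two to conclude.

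For stage (i): the hypothesis says that through every $p\in\partial\Omega$ there passes a germ of complex curve contained in $\partial\Omega$. I would show that these curves are locally unique and fit together, on a dense open subset of $\partial\Omega$, into a foliation whose leaves are complex curves, so that $\partial\Omega$ is Levi-flat in the sense that its Levi form --- which exists almost everywhere, $\partial\Omega$ being $C^{1,1}$ --- vanishes a.e. The delicate point here, absent in the $C^2$ theory, is exactly that second-order boundary data is only defined almost everywhere; I would either appeal to regularity theory for low-smoothness Levi-flat hypersurfaces to recover a continuous (Lipschitz) Levi foliation, or, in the spirit of Zimmer's treatment of the $C^{1,1}$ case, replace pointwise Levi-form arguments by boundary estimates for the Kobayashi and Bergman metrics that use only $C^{1,1}$ control.

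For stage (ii): passing to a boundary neighborhood $\widehat\Omega=\Omega\cap U$ biholomorphic to a bounded domain in $\C^2$, I would transport the local Levi foliation around $\Omega$ using a sequence in $\Gamma$ accumulating at a boundary point and use the cocompact action to glue these local pictures into a holomorphic foliation of a $\Gamma$-invariant neighborhood of $\partial\Omega$; the ``flow'' transverse to the leaves, or holonomy along them, should produce a nontrivial connected subgroup of $\Aut(\Omega)$. This is the analogue, in the Levi-flat setting, of Proposition~\ref{MRProof2}. Granting it, Theorem~\ref{FNSymmetric} applies as in Proposition~\ref{MRProof3}: the universal cover $\widetilde\Omega$ is either a bounded symmetric domain or a product $D_1\times D_2$ with $\Aut(D_2)$ discrete. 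Since $\widetilde\Omega$ is simply connected and $n=2$, uniformization on the one-dimensional factors together with the fact that $\B^2$ and $\D\times\D$ are the only bounded symmetric domains of complex dimension two leaves only $\widetilde\Omega\cong\B^2$ or $\widetilde\Omega\cong\D\times\D$. The ball is excluded precisely by the hypothesis: a relatively compact domain with $C^{1,1}$ boundary whose universal cover is $\B^2$ must have a strictly pseudoconvex --- hence totally real --- boundary point, since an end of a ball quotient that can be compactified with $C^{1,1}$ boundary carries a piece of the strictly pseudoconvex sphere $\partial\B^2$, contradicting the assumption; so $\widetilde\Omega\cong\D\times\D$.

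The main obstacle, I expect, is stage (ii), and more specifically the failure of the rescaling technique that drives the present paper. At a Levi-flat boundary point the Frankel rescaling degenerates: the rescaled limit is a half-space, biholomorphic to $\D\times\C$, which is not hyperbolic, so Theorem~\ref{ZimThm4.2} does not apply and one cannot simply read a one-parameter subgroup off the limit domain. This is the difficulty that forced \cite{CFKW} to bring in the two-dimensional invariant-volume-form estimates of \cite{FW1,FW2}, and making those estimates --- together with the Levi-foliation arguments of stage (i) --- robust to boundaries that are merely $C^{1,1}$ is the crux of the conjecture. A secondary difficulty is the purely complex-analytic one of passing from a foliation defined near one boundary point to a global holomorphic foliation of a neighborhood of $\partial\Omega$; here the cocompactness of the $\Gamma$-action should be the essential tool.
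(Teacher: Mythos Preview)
The statement you are attempting is labeled a \emph{Conjecture} in the paper and is not proved there. The paper offers only a one-paragraph speculation in Section~\ref{ConclusionSec}: the argument ``will likely be similar to the one presented in Section~4 of \cite{CFKW}'' with the $C^\infty$ defining function replaced by a $C^{1,1}$ one, and the author ``suspect[s] some technical issues arising from this alteration.'' There is no proof to compare against.

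Your proposal is thus not competing with a paper proof; it is a more detailed elaboration of the same speculation. Where the paper simply points at \cite{CFKW}, you first try to route the argument through the Frankel--Nadel dichotomy (Theorem~\ref{FNSymmetric}) in the style of Section~\ref{ProofSec}, and then you correctly diagnose why that route stalls: at a Levi-flat boundary point the Frankel rescaling blows up to a non-hyperbolic half-space, so the mechanism that produced the one-parameter subgroup in Proposition~\ref{MRProof2} is unavailable, and one is forced back onto the invariant-volume-form estimates of \cite{FW1,FW2} that drive \cite{CFKW}. That is the same obstacle the paper gestures at with ``technical issues,'' and your identification of it as the crux is accurate.

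Two places in your outline would need tightening even as a plan. First, your exclusion of the ball in stage~(iii) --- arguing that a $C^{1,1}$-bounded $\Omega$ whose universal cover is $\B^2$ must carry a strictly pseudoconvex boundary point because an end ``carries a piece of $\partial\B^2$'' --- is not a valid step: a biholomorphism $\widetilde\Omega\to\B^2$ need not extend to the boundary, and $\partial\Omega$ lives in $M$, not in $\C^2$. In \cite{CFKW} the bidisk is obtained \emph{directly} from the Levi foliation and the volume-form analysis, not by landing in a symmetric-domain dichotomy and then ruling out the ball; your plan should do the same, which also makes stage~(iii) unnecessary. Second, the regularity problems you flag in stage~(i) --- extracting a genuine Levi foliation from a $C^{1,1}$ hypersurface whose Levi form exists only a.e. --- are real, are not addressed by anything in this paper, and are exactly the open content the conjecture is meant to isolate.
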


The proof of this result will likely be similar to the one presented in Section 4 of \cite{CFKW} with some minor modifications to the argument where the defining function for $\partial\Omega$ is concerned. In particular, we choose the defining function $r$ to be a $C^{1,1}$ function rather than $C^{\infty}$. We suspect some technical issues arising from this alteration, but the argument should still follow much of the same logic.

In $n$ dimensions, the situation becomes much more complicated. We believe that in the absence of a totally real boundary point, we will obtain a universal cover biholomorphic to some higher rank symmetric space, or a compact manifold quotiented by a disk. Or we will be in the situation where, at each point, there will be a bidisk properly embedded in the universal cover. In any case, the argument will depend on the dimension of the disk sitting on the boundary of the domain $\Omega$.

\section*{Acknowledgement}
The author thanks Professor Bun Wong for his invaluable guidance during the early stages of this research. The author also extends his gratitude to the anonymous reviewer for their detailed suggestions and observations.

\end{document}